\begin{document}
\setcounter{page}{1}
\setlength{\unitlength}{12mm}
%________________________Please insert your necessary newcommands____________________________________________
\newcommand{\f}{\frac}
\newtheorem{theorem}{Theorem}[section]
\newtheorem{lemma}[theorem]{Lemma}
\newtheorem{proposition}[theorem]{Proposition}
\newtheorem{corollary}[theorem]{Corollary}
\theoremstyle{definition}
\newtheorem{definition}[theorem]{Definition}
\newtheorem{example}[theorem]{Example}
\newtheorem{solution}[theorem]{Solution}
\newtheorem{xca}[theorem]{Exercise}
\theoremstyle{remark}
\newtheorem{remark}[theorem]{Remark}
\numberwithin{equation}{section}
\newcommand{\sta}{\stackrel}
%_________________________Pleae insert full name and address of authors______________________________________
\title{\bf Diaz-Metcalf Type Inequality for Sugeno and Pseudo Integrals}
\author {Mohammad Reza Karimzadeh$^{a}$, Bayaz Daraby$^{b}$, Asghar Rahimi$^{c}$}
\date{\footnotesize $^{a, b, c}$Department of Mathematics, University of Maragheh, P. O. Box 55136-553, Maragheh, Iran
%$^{d}$Department of Mathematics and Descriptive Geometry, Slovak University of Technology, SK-81005 Bratislava, Slovakia, Czech Academy of Sciences, Institute of Information Theory and Automation.
}
\maketitle
%_____________________________________________________________________
%_________________________Please do not change this part any way____________________________________________

%_____________________________________________________________________
%_________________________Please type the abstract here_____________________________________________________
\begin{abstract}
\noindent
In this paper, we have proved Diaz-Metcolf inequality for fuzzy integrals. More precisely:
\\
If $f, g: [0, 1]\to\mathbb{R}$ are continuous and strictly increasing functions, then the inequality
$$ - \hspace{-1em} \int_0^1 f^s d\mu\cdot  - \hspace{-1em} \int_0^1  g^sd\mu\le  - \hspace{-1em} \int_0^1\left(f\cdot g\right)^sd\mu,$$
holds, where $s>1$ and $\mu$ is  the Lebesque measure on $\mathbb{R}$. In addition, we have shown this inequality for pseudo-integrals.

\vspace{.5cm}\leftline{{Subject Classification 2010: 03E72, 26E50, 28E10.
}}

\vspace{.5cm}\leftline{Keywords: Diaz-Metcalf type inequality, Fuzzy integral, Integral inequality, Pseudo integral.}
\end{abstract}
%_____________________________________________________________________
\section{Introduction}

Fuzzy measure and Sugeno integral, which were originally introduced by Sugeno in 1974
 \cite{20}, are important analytical methods of measuring uncertain information \cite{p4, p2}.
\\
The study of inequalities for the Sugeno integral, which was initiated by Rom\'{a}n-Flores et al., was discussed in several papers.
Recently, the Sugeno integral counterparts of several classical inequalities, including Markov's, Chebyshev's, Jensen's, Minkowski's, H\"{o}lder's and Hardy's inequalities, were given by Flores-Franuli\v{c} and Rom\'{a}n-Flores \cite{5, 6}.
%\\
%We state the Diaz-Metcalf inequality in classical case in the following form.
\begin{theorem}\label{t1}
\cite{Ouyang3}
Let $\mu$ be a Lebesque measure on $\mathbb{R}$ and $g: [0, \infty)\to[0, \infty)$ be a continuous and strictly increasing function and
$- \hspace{-1em} \int_0^a gd\mu=p$, then for all $a\ge 0$ we have
$$g(a-p)\ge - \hspace{-1em} \int_0^a gd\mu=p.$$
Moreover, if $0<p<a$, then
$g(a-p)=p$.
\end{theorem}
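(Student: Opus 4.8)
The plan is to unwind the definition of the Sugeno integral and reduce the statement to an elementary fixed-point analysis of a one-dimensional function. Recall that
$$- \hspace{-1em} \int_0^a g\, d\mu = \sup_{\alpha\ge 0}\,\min\bigl(\alpha,\, m(\alpha)\bigr), \qquad m(\alpha):=\mu\bigl([0,a]\cap\{x:g(x)\ge\alpha\}\bigr).$$
First I would record the basic properties of the level function $m$. Since $g$ is continuous and strictly increasing on $[0,\infty)$ with values in $[0,\infty)$, the superlevel set $\{x\in[0,a]:g(x)\ge\alpha\}$ is a subinterval of $[0,a]$ whose left endpoint moves continuously with $\alpha$; explicitly $m(\alpha)=a$ for $\alpha\le g(0)$, $m(\alpha)=a-g^{-1}(\alpha)$ for $g(0)\le\alpha\le g(a)$, and $m(\alpha)=0$ for $\alpha\ge g(a)$. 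In particular $m$ is continuous and nonincreasing with $m(0)=a$.

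The key step is to show that the supremum defining $p$ is attained exactly where the diagonal meets $m$, i.e. $m(p)=p$. For this I would note that $\alpha\mapsto\alpha-m(\alpha)$ is strictly increasing and continuous, running from $-a$ to $+\infty$, so it has a unique zero $\alpha^\ast$ with $\alpha^\ast=m(\alpha^\ast)$; a short monotonicity argument then shows $\min(\alpha,m(\alpha))$ increases on $[0,\alpha^\ast]$ and decreases on $[\alpha^\ast,\infty)$, so that $p=\alpha^\ast=m(\alpha^\ast)$. Hence $\mu([0,a]\cap\{g\ge p\})=p$, and in particular $p\le a$, so that $g(a-p)$ is well defined.

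Finally I would read off the conclusion from $m(p)=p$. Using the explicit form of $m$ there are two cases. If $g(0)\ge a$, then $m(p)=p$ forces $p=a$, so $a-p=0$ and $g(a-p)=g(0)\ge a=p$, which gives the asserted inequality (and it is strict when $g(0)>a$). Otherwise $g(0)<a$, and then $m(p)=p$ cannot land on the flat pieces where $m$ equals $a$ or $0$, so necessarily $p\in(g(0),g(a))$ and $m(p)=a-g^{-1}(p)=p$; solving yields $g^{-1}(p)=a-p$ and therefore $g(a-p)=p$ exactly. Checking that this second case is precisely the regime $0<p<a$ delivers the ``moreover'' statement. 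The main obstacle I anticipate is the rigorous justification that the supremum is realized at the fixed point $m(p)=p$ (handling the continuity of $m$ and its flat pieces), together with the careful boundary bookkeeping that separates the possibly strict inequality at $p=a$ from the equality holding throughout $0<p<a$.
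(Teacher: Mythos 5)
The paper does not prove this statement at all: it is quoted verbatim from the cited reference [Ouyang3] (Ouyang and Fang, 2008) and used as a known tool, so there is no in-paper argument to compare against. Judged on its own, your proposal is correct and is essentially the standard proof of this result. The computation of the level function $m(\alpha)=\mu([0,a]\cap\{g\ge\alpha\})$ is right ($m\equiv a$ below $g(0)$, $m(\alpha)=a-g^{-1}(\alpha)$ on $[g(0),g(a)]$, $m\equiv 0$ above $g(a)$), and the key step --- that the supremum of $\min(\alpha,m(\alpha))$ is attained at the unique fixed point of $m$ --- is justified cleanly by observing that $\alpha-m(\alpha)$ is continuous and strictly increasing from $-a$ to $+\infty$, so that $\min(\alpha,m(\alpha))=\alpha$ to the left of the zero and $\min(\alpha,m(\alpha))=m(\alpha)\le m(\alpha^\ast)$ to the right. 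Your case analysis from $m(p)=p$ is also complete: the flat piece at height $0$ is excluded because it would force $p=0\ge g(a)>0$, the flat piece at height $a$ gives exactly the boundary case $p=a$ with $g(a-p)=g(0)\ge a=p$, and the middle piece gives $g^{-1}(p)=a-p$, i.e.\ $g(a-p)=p$, which is precisely the regime $0<p<a$. The only thing I would add for completeness is the degenerate case $a=0$, where $p=0$ and the inequality reads $g(0)\ge 0$, which is immediate.
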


\begin{theorem}\label{t2}\cite{Ouyang3}
Let $\mu$ be a Lebesque measure on $\mathbb{R}$ and $g: [0, \infty)\to[0, \infty)$ be a continuous and strictly decreasing function and $- \hspace{-1em} \int_0^a gd\mu=p$, then for all $a\ge 0$, we have
$$g(p)\ge - \hspace{-1em} \int_0^a gd\mu=p.$$
Moreover, if $0<p<a$, then
$g(p)=p$.
\end{theorem}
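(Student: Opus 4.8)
The plan is to reduce everything to the level-set (distribution) function of $g$ and exploit the monotone crossing that characterizes the Sugeno integral. Recall that for the Lebesgue measure $\mu$ on $[0,a]$ the fuzzy integral can be written as
$$ - \hspace{-1em} \int_0^a g\, d\mu = \sup_{\alpha\ge 0}\Big[\alpha\wedge F(\alpha)\Big], \qquad F(\alpha):=\mu\big([0,a]\cap\{x: g(x)\ge\alpha\}\big),$$
where $\wedge$ is the minimum. So the first step is to compute $F$ explicitly. Since $g$ is continuous and strictly decreasing, the condition $g(x)\ge\alpha$ is equivalent to $x\le g^{-1}(\alpha)$, which yields the three-piece formula $F(\alpha)=a$ for $\alpha\le g(a)$, $F(\alpha)=g^{-1}(\alpha)$ for $g(a)<\alpha\le g(0)$, and $F(\alpha)=0$ for $\alpha>g(0)$. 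In particular $F$ is continuous and non-increasing on $[0,\infty)$, with $F(0)=a$.

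The second step is to locate the supremum. Because $\alpha\mapsto\alpha$ is strictly increasing while $F$ is continuous and non-increasing with $F(0)=a\ge 0$ and $F(\alpha)\to 0$, the function $h(\alpha):=F(\alpha)-\alpha$ is continuous and strictly decreasing with $h(0)\ge 0$, hence it has a unique zero $\alpha^\ast$ satisfying $F(\alpha^\ast)=\alpha^\ast$. A short check—$\alpha\wedge F(\alpha)=\alpha$ for $\alpha\le\alpha^\ast$ and $\alpha\wedge F(\alpha)=F(\alpha)$ for $\alpha\ge\alpha^\ast$—shows the supremum is attained at $\alpha^\ast$ and equals it. Thus $p=\alpha^\ast$ and, crucially, $F(p)=p$; note also $0\le p\le a$ since $F\le a$.

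The final step reads off both conclusions from the identity $F(p)=p$ using the three branches. When $0<p<a$ the values $F(p)=a$ and $F(p)=0$ are excluded, so $p$ lies in the middle branch and $F(p)=g^{-1}(p)=p$ gives $g(p)=p$; this is the "moreover" equality. For the global inequality I would treat the two endpoints separately: $p=0$ gives $g(p)=g(0)\ge 0=p$, while $p=a$ forces the top branch $F(p)=a$, which is valid only for $p\le g(a)$, whence $g(p)=g(a)\ge a=p$. Combining the three cases yields $g(p)\ge p$ throughout.

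I expect the only delicate point to be the justification that the supremum is attained exactly at the crossing $\alpha^\ast$ and equals it; this rests on the continuity of $F$ (inherited from the continuity of $g$) together with strict monotonicity, and the argument runs parallel to the strictly increasing case of Theorem \ref{t1}, with $g^{-1}(\alpha)$ playing the role of $a-g^{-1}(\alpha)$. Everything else is bookkeeping with the three branches of $F$.
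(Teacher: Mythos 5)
Your argument is correct. Note, however, that the paper does not prove this statement at all: Theorem \ref{t2} is quoted from Ouyang and Fang \cite{Ouyang3} as a known auxiliary result, so there is no internal proof to compare against. Your reduction to the distribution function $F(\alpha)=\mu([0,a]\cap\{g\ge\alpha\})$, the identification of the Sugeno integral $p$ with the unique fixed point $F(p)=p$ of the continuous non-increasing $F$, and the case analysis over the three branches (including the endpoint cases $p=0$ and $p=a$, where only the inequality $g(p)\ge p$ survives) is exactly the standard proof from the cited source, and all the steps you flag as delicate (continuity of $F$, attainment of the supremum at the crossing) are justified as you describe.
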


We state Diyaz-Metcalf inequality in classical case is the following form.

\begin{theorem}\cite{di}
Let $(X, \mu)$ be measurement space and $f, g\in L^2(X, \mu)$ and $0\le mg\le f\le Mg$. Then
\begin{eqnarray}
\label{diaz}
\int_X f^2d\mu\cdot\int_X g^2d\mu\le\dfrac{(M+m)}{4Mm}\left(\int_X \left(f\cdot g\right)d\mu\right)^2.
\end{eqnarray}

\end{theorem}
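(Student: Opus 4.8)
The plan is to reduce this product inequality to a single pointwise estimate followed by an application of the arithmetic–geometric mean inequality. The two-sided hypothesis $0 \le mg \le f \le Mg$ says precisely that $f - mg \ge 0$ and $Mg - f \ge 0$ hold at (almost) every point of $X$, so their product is nonnegative there. Expanding
$$(f - mg)(Mg - f) = (M+m)fg - f^2 - Mm\, g^2 \ge 0$$
gives the pointwise bound $f^2 + Mm\, g^2 \le (M+m)fg$, which is the heart of the argument.

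Next I would integrate this inequality over $X$ against $\mu$, using only linearity and monotonicity of the Lebesgue integral, to reach the additive Diaz–Metcalf estimate
$$\int_X f^2 d\mu + Mm \int_X g^2 d\mu \le (M+m)\int_X fg\, d\mu.$$
The transition to the stated product form is then forced by the arithmetic–geometric mean inequality: since both $\int_X f^2 d\mu$ and $Mm\int_X g^2 d\mu$ are nonnegative, their sum is at least $2\sqrt{Mm}\,\bigl(\int_X f^2 d\mu \cdot \int_X g^2 d\mu\bigr)^{1/2}$. Combining this with the additive estimate, squaring (legitimate because every quantity in sight is nonnegative), and dividing by $4Mm$ produces the claimed inequality, with the sharp constant reading $(M+m)^2/(4Mm)$.

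I do not expect a serious obstacle here, since the whole argument rests on one elementary factorisation; the only points demanding care are confirming nonnegativity before squaring and tracking the equality cases. Equality in the arithmetic–geometric mean step forces $\int_X f^2 d\mu = Mm\int_X g^2 d\mu$, while equality in the pointwise factorisation forces $f = mg$ or $f = Mg$ almost everywhere, and together these characterise when the bound is attained. The reason I single out this classical statement is that its proof template — convert a two-sided ratio bound into a product of nonnegative factors — is exactly what I will try to transplant to the Sugeno and pseudo-integral settings, where linearity fails and the integration step must instead be driven by the monotonicity properties recorded in Theorems \ref{t1} and \ref{t2}.
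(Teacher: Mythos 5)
The paper offers no proof of this theorem: it is quoted from the reference \cite{di} as a known classical result, so there is nothing in the source to compare your argument against line by line. Judged on its own, your proof is the standard and correct derivation: the pointwise factorisation $(f-mg)(Mg-f)\ge 0$ yields $f^2+Mm\,g^2\le (M+m)fg$, integration gives the additive Diaz--Metcalf estimate, and the AM--GM step $2\sqrt{Mm}\left(\int_X f^2d\mu\cdot\int_X g^2d\mu\right)^{1/2}\le \int_X f^2d\mu+Mm\int_X g^2d\mu$ followed by squaring finishes the argument. Two small remarks. First, your derivation produces the constant $(M+m)^2/(4Mm)$, whereas the statement as printed has $(M+m)/(4Mm)$; this is evidently a typographical slip in the paper (the squared constant is the one used later in Example 3.1 and Theorem 3.3), and your version is the correct one. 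Second, the final division by $4Mm$ silently requires $m>0$ (and $M>0$), which the hypothesis $0\le mg\le f\le Mg$ does not literally guarantee; you should state $0<m\le M$ explicitly. With that proviso, the proof is complete, and your closing observation about why this template does not transplant directly to the Sugeno setting (loss of linearity in the integration step) is consistent with what the paper actually does in Section 3, where a different monotonicity-based argument is used.
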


With an example, we have shown that the classical version of the Diaz-Metcalf inequality is not valid for Sugeno integrals, and we have shown the Fuzzy form of the inequality, as well.% state the fuzzy state.

Pseudo-analysis is a generalization of the classical analysis, where instead of the field of real numbers a semiring is taken on a real interval $[a, b]\subseteq[-\infty, +\infty]$ endowed with pseudo-addition $\oplus$ and with pseudo-multiplication $\odot$.
Many authors such as H. Agahi, R. Mesiar and Y. Ouyang in \cite{aga} studied Chebyshev's inequality for pseudo-integrals as follows:

\begin{theorem}
Let $u, v:[0, 1]\to [a, b]$ are measurable function and $g: [a, b]\to[0, \infty)$ generator function for pseud- operation for $\oplus$ and $odot$. If $u$ and $v$ comonotone functions, we have
$$\int_{[0, 1]}^\oplus (u\odot v)dx\ge\left(\int_{[0, 1]}^\oplus udx\right)\odot \left(\int_{[0, 1]}^\oplus vdx\right).$$
\end{theorem}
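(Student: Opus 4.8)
The plan is to transport the entire inequality through the generator $g$ so that it collapses to the classical Chebyshev integral inequality for similarly ordered (comonotone) functions. Recall that when $\oplus$ and $\odot$ are generated by $g$ one has $x\odot y=g^{-1}\left(g(x)g(y)\right)$ and the pseudo-integral is computed as $\int_{[0,1]}^{\oplus} f\,dx=g^{-1}\left(\int_{[0,1]} g(f)\,dx\right)$, where the integral on the right is the ordinary Lebesgue integral. Throughout I take $g$ to be increasing, so that $g^{-1}$ is increasing and the pseudo-order $\ge$ agrees with the natural order under $g^{-1}$.

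First I would rewrite the left-hand side. Since $g(u\odot v)=g(u)\,g(v)$ by the definition of $\odot$, the pseudo-integral becomes
$$\int_{[0,1]}^{\oplus}(u\odot v)\,dx=g^{-1}\left(\int_{[0,1]} g(u)\,g(v)\,dx\right).$$
Next I would rewrite the right-hand side. Writing $A=\int_{[0,1]} g(u)\,dx$ and $B=\int_{[0,1]} g(v)\,dx$, the two pseudo-integrals are $g^{-1}(A)$ and $g^{-1}(B)$, and applying the definition of $\odot$ once more gives
$$\left(\int_{[0,1]}^{\oplus} u\,dx\right)\odot\left(\int_{[0,1]}^{\oplus} v\,dx\right)=g^{-1}\left(A\cdot B\right).$$
Because $g^{-1}$ is monotone, the asserted pseudo-inequality is therefore equivalent to the ordinary inequality
$$\int_{[0,1]} g(u)\,g(v)\,dx\ge\left(\int_{[0,1]} g(u)\,dx\right)\left(\int_{[0,1]} g(v)\,dx\right).$$

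To close the argument I would invoke the classical Chebyshev inequality for comonotone functions on the probability space $\left([0,1],dx\right)$. The key observation is that comonotonicity is inherited under composition with a monotone map: since $u$ and $v$ are comonotone and $g$ is monotone, the sign of $g(u(x))-g(u(y))$ matches that of $u(x)-u(y)$ (and similarly for $v$), so $g(u)$ and $g(v)$ are again comonotone and Chebyshev's inequality applies verbatim to yield exactly the displayed bound. I expect the only genuine subtlety to be the bookkeeping on the direction of the generator, namely checking that the monotonicity of $g$ (and hence of $g^{-1}$) is used consistently both in reducing the pseudo-order to the natural order and in transferring comonotonicity from $u,v$ to $g(u),g(v)$; together with the implicit use that Lebesgue measure on $[0,1]$ is a probability measure, these are the points that legitimize the reduction, while every remaining step is a purely mechanical transport through $g$.
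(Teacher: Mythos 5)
The paper does not actually prove this statement: it is quoted verbatim (as Theorem 1.4) from Agahi, Mesiar and Ouyang \cite{aga}, so there is no in-paper argument to compare against. Your reduction --- push everything through the generator $g$ using $g(u\odot v)=g(u)g(v)$ and $\int^{\oplus}f\,dx=g^{-1}\bigl(\int g(f)\,dx\bigr)$, observe that comonotonicity survives composition with a monotone $g$, and invoke the classical Chebyshev correlation inequality on the probability space $([0,1],dx)$ --- is correct and is precisely the standard proof of the generated case given in that reference. The only loose end is the one you already flag: for a decreasing generator the inequality must be read with respect to the order $\preceq$ induced by $g$ rather than the usual order, but comonotonicity of $g(u),g(v)$ is preserved either way, so the argument goes through.
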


Recently, B. Daraby et. al., in \cite{a, b, dar2,  bre, c} studied and generalized some other inequalities for the Fuzzy and Pseudo integrals, for example, he stated and proved \cite{2-DDD} the Stolarsky type of inequality for Pseudo integrals as it follows:% \cite{2-DDD}.

\begin{theorem}
Let $a, b>0$, $f:[0, 1]\to[0, 1]$ be a continuous and strictly decreasing function and $\mu$ be the Lebesque measure on $\mathbb{R}$ Then
$$- \hspace{-1em} \int_0^1 f\left(x^{\frac{1}{(a+b)}}\right)d\mu\ge\left(- \hspace{-1em} \int_0^1 f\left(x^{\frac{1}{a}}\right)d\mu\right)\left(- \hspace{-1em} \int_0^1 f\left(x^{\frac{1}{b}}\right)d\mu\right).$$
\end{theorem}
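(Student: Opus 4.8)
The plan is to reduce the inequality to a comparison between three values produced as fixed points by Theorem~\ref{t2}. First I would set
$$p = - \hspace{-1em} \int_0^1 f\left(x^{1/(a+b)}\right)d\mu,\qquad q = - \hspace{-1em} \int_0^1 f\left(x^{1/a}\right)d\mu,\qquad r = - \hspace{-1em} \int_0^1 f\left(x^{1/b}\right)d\mu,$$
and observe that for every $c>0$ the map $x\mapsto f\left(x^{1/c}\right)$ is the composition of the strictly increasing bijection $x\mapsto x^{1/c}$ of $[0,1]$ with the continuous strictly decreasing $f$, hence is itself continuous, strictly decreasing and takes values in $[0,1]$. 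Each of $p,q,r$ therefore lies in $[0,1]$; the degenerate cases where one of them equals $0$ make the asserted inequality immediate, so I would work in the range $0<p,q,r<1$, where the equality clause of Theorem~\ref{t2} (applied with upper limit $1$) gives the three fixed-point relations
$$f\left(p^{1/(a+b)}\right)=p,\qquad f\left(q^{1/a}\right)=q,\qquad f\left(r^{1/b}\right)=r.$$

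Next I would pass through $f^{-1}$, which exists and is strictly decreasing because $f$ is. The last two relations give $q^{1/a}=f^{-1}(q)$ and $r^{1/b}=f^{-1}(r)$, that is $q=\left(f^{-1}(q)\right)^{a}$ and $r=\left(f^{-1}(r)\right)^{b}$, whence
$$(qr)^{1/(a+b)}=\left(f^{-1}(q)\right)^{a/(a+b)}\left(f^{-1}(r)\right)^{b/(a+b)}.$$
The right-hand side is a weighted geometric mean of the positive numbers $f^{-1}(q)$ and $f^{-1}(r)$ with weights summing to $1$, so it lies between $\min\{f^{-1}(q),f^{-1}(r)\}$ and $\max\{f^{-1}(q),f^{-1}(r)\}$.

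The core step is to feed this mean back into $f$. Applying the decreasing function $f$ to the sandwich $(qr)^{1/(a+b)}\le\max\{f^{-1}(q),f^{-1}(r)\}$ reverses the inequality and, using $f\left(f^{-1}(q)\right)=q$ and $f\left(f^{-1}(r)\right)=r$, yields $f\left((qr)^{1/(a+b)}\right)\ge\min\{q,r\}$. Since $q,r\le 1$ force $qr\le\min\{q,r\}$, this produces the pivotal estimate
$$f\left((qr)^{1/(a+b)}\right)\ge qr.$$

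Finally I would conclude by the monotone fixed-point characterisation of $p$. Writing $G(x)=f\left(x^{1/(a+b)}\right)$, the relation $G(p)=p$ together with the strict decrease of $G$ shows that $G(x)\ge x$ holds precisely when $x\le p$; applied at $x=qr$, the pivotal estimate $G(qr)\ge qr$ therefore forces $qr\le p$, which is the claim. I expect the main obstacle to be bookkeeping the directions of the inequalities correctly through the two decreasing maps $f$ and $f^{-1}$, and giving a clean treatment of the endpoint cases in which the equality clause of Theorem~\ref{t2} is unavailable; once those are settled, the weighted geometric-mean sandwich carries the essential content.
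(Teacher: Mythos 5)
Your proof is correct, but there is nothing in the paper to compare it with: this statement is quoted in the introduction as a known result taken from the cited reference (Daraby, \emph{Fuzzy Sets and Systems} 194 (2012)), and the paper supplies no proof of it. What you give is essentially the standard argument for the Stolarsky-type inequality for Sugeno integrals: the fixed-point identities $f(p^{1/(a+b)})=p$, $f(q^{1/a})=q$, $f(r^{1/b})=r$ supplied by the Ouyang--Fang result for decreasing functions (the second theorem quoted in the introduction); the rewriting of $(qr)^{1/(a+b)}$ as a weighted geometric mean of $q^{1/a}=f^{-1}(q)$ and $r^{1/b}=f^{-1}(r)$ with weights $a/(a+b)$ and $b/(a+b)$, hence bounded above by $\max\{q^{1/a},r^{1/b}\}$; the resulting estimate $f\bigl((qr)^{1/(a+b)}\bigr)\ge\min\{q,r\}\ge qr$; and the comparison with the fixed point of the strictly decreasing map $x\mapsto f\bigl(x^{1/(a+b)}\bigr)$ to conclude $qr\le p$. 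The one loose end is your treatment of the boundary: you dismiss only the value $0$, but the equality clause of the fixed-point theorem also requires the integral to be strictly less than the length of the interval, i.e.\ strictly less than $1$, and $p\ge qr$ is not immediate if, say, $q=1$. This is easily repaired: for a continuous strictly decreasing $f:[0,1]\to[0,1]$ one has $f(0)>0$, so each of the three Sugeno integrals is positive, and the level sets $\{x\in[0,1]\mid f(x^{1/c})\ge\alpha\}$ shrink to a set of measure $0$ as $\alpha$ increases to $1$ (or are empty once $\alpha>f(0)$), so each integral is also strictly less than $1$; with that observation added, your argument is complete.
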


We have also organized in this article the following: In Section 2, we have described the definitions, properties and results of fuzzy measure, fuzzy integrals and pseudo-integrals. In Section 3,  the first  we have shown with an example that the classical Diaz-Metcalfe inequality is not valid in the fuzzy case, and then we state and prove the fuzzy and pseudo integrals cases. Finally, in Section 4, we have completly disxussed the Diaz-Metcalf inequality.% complete the discussion.

 \section{Preliminary}
 We denote by $\mathbb{R}$, the set of all real numbers. Let $X$ be a non-empty set and $\Sigma$ be a $\sigma-$algebra of subsets of $X$. Throughout this paper, all considered subsets are supposed to be in $\Sigma$.

 \begin{definition}(Ralescu and Adams \cite{12}).
A set function $\mu:\Sigma\to[0, +\infty]$ is called a fuzzy measure if the following properties are satisfied:
\begin{enumerate}
\item[(FM1)] $\mu(\emptyset)=0 $,
\item[(FM2)]
 $A \subseteq B\Rightarrow\mu(A)\leq \mu(B)$,
% (monotonicity).
\item[(FM3)]
 $A_1 \subseteq A_2 \subseteq\ldots\Rightarrow\lim \mu(A_i)=\mu\left(\bigcup \limits_{i=1}^\infty A_i\right)$,
\item[(FM4)]
  $A_1 \supseteq A_2 \supseteq \ldots$ and $\mu(A_1)<\infty\Rightarrow\lim \mu(A_i)=\mu\left(\bigcap\limits_{i=1}^\infty A_i\right)$.
\end{enumerate}
When $\mu$ is a fuzzy measure, the triple $(X, \Sigma,\mu)$ is called a fuzzy measure space.
\end{definition}

If $f$ is a non-negative real-valued function on $X$, we will denote
$$F_{\alpha}=L_{\alpha}f =\left\{x\in X \mid f(x)\geq\alpha\right\}=\left\{f\geq\alpha\right\},$$ the
$\alpha$-level of $f$, for $\alpha>0.$
$L_{0}f =\overline{\left\{x\in X \mid
f(x)>0\right\}}={\rm supp}(f)$  is the  support of $f$. We know that:
 $$\alpha \leq \beta \Rightarrow  \left\{f \geq
\beta\right\} \subseteq \left\{f\geq\alpha \right\}.$$
If $\mu$ is a  fuzzy measure on $(X, \Sigma)$, we define the
following:
\begin{eqnarray*}
&& \mathfrak {F}^{\sigma}(X)=\left\{f : X\rightarrow[0,\infty)|f~ {\rm is}\ \mu-{\rm measurable}\right\}.%,\\&& \mathfrak {F}^{\sigma}_c(X)=\left\{f\in  \mathfrak {F}^{\sigma}(X)| L_{\alpha}f~ {\rm is~convex}~~\forall \alpha\in [0, \infty)\right\}
\end{eqnarray*}

%%%%%%%%%%%%%%%%%%%%
\begin{definition}
(Wang and Klir \cite{21}).
 Let $\mu$ be a fuzzy measure on
$(X, \Sigma)$. If $f \in \mathfrak{F}^\mu(X)$ and $A\in \Sigma$, then
the Sugeno integral of $f$ on $A$, with respect
to the fuzzy measure $\mu$, is defined as
$$ - \hspace{-1em} \int_A fd\mu =\bigvee_{\alpha \geq 0} \left(\alpha \wedge\mu(A\cap F_\alpha)\right).\hspace{-1em}$$
Where $\vee$ and $ \wedge$ denotes the operations $sup$ and $inf$ on
 $[0,\infty]$, respectively. In particular, if $A=X$ then
$$ - \hspace{-1em} \int_{X} fd\mu =- \hspace{-1em} \int fd\mu=\bigvee_{\alpha \geq 0} \left(\alpha\wedge\mu(F_\alpha)\right).$$
\end{definition}

The following properties of the Sugeno integral can be found in \cite{ 21}.

\begin{proposition}\label{p23}
 (Wang and Klir \cite{21}).
  Let $(X, \Sigma, \mu)$ be a fuzzy measure space, with $A, B \in \sum$ and $f, g \in  \mathfrak {F}^{\sigma}(X)$. We have
\begin{enumerate}
\item[(1)]
 $- \hspace{-.9em} \int_A f d \mu \leq \mu(A)$.
\item[(2)]
 $- \hspace{-.9em} \int_A k d \mu =k \wedge \mu(A),$ for $k$ non-negative constant.
\item[(3)]
 If $A \subset B,$ then $- \hspace{-.9em} \int_A fd \mu \leq - \hspace{-.9em} \int_B fd \mu$.
\item[(4)]
$- \hspace{-.9em} \int_{A\cup B} fd\mu\ge - \hspace{-.9em} \int_A fd\mu\vee - \hspace{-.9em} \int_B fd\mu$.
\item[(5)]
 If $\mu(A)< \infty,$ then $- \hspace{-.9em} \int_A fd \mu \geq \alpha \Leftrightarrow \mu(A \cap \{f \geq \alpha\}) \geq \alpha$.
\item[(6)]
$- \hspace{-.8em} \int_A fd \mu \leq \alpha \Leftrightarrow \mu(A\cap F_{\alpha^+})\le \alpha\Leftarrow
 \mu(A \cap \{f \geq \alpha\}) \leq \alpha $.
 \item[(7)]
 $- \hspace{-.8em} \int_A fd \mu > \alpha \Leftrightarrow \mu(A\cap F_{\alpha^+})> \alpha\Rightarrow  \mu(A \cap \{f \geq \alpha\}) \leq \alpha $.
\end{enumerate}
\end{proposition}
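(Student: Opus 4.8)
The plan is to read off all seven properties directly from the definition $- \hspace{-1em} \int_A fd\mu=\bigvee_{\beta\ge 0}\bigl(\beta\wedge\mu(A\cap F_\beta)\bigr)$, using nothing beyond the measure axioms (FM2)--(FM4). It is convenient to abbreviate $h(\beta)=\mu(A\cap F_\beta)$ and to record first that $h$ is non-increasing: if $\beta_1\le\beta_2$ then $F_{\beta_2}\subseteq F_{\beta_1}$, so $h(\beta_2)\le h(\beta_1)$ by monotonicity (FM2). Properties (1)--(4) are then immediate. For (1), every term obeys $\beta\wedge h(\beta)\le h(\beta)\le\mu(A)$, so the supremum is at most $\mu(A)$. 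For (2) one evaluates the level sets of the constant $k$, namely $F_\beta=X$ for $\beta\le k$ and $F_\beta=\emptyset$ for $\beta>k$, whence the supremum collapses to $\bigvee_{\beta\le k}\bigl(\beta\wedge\mu(A)\bigr)=k\wedge\mu(A)$. Property (3) follows because $A\subseteq B$ gives $A\cap F_\beta\subseteq B\cap F_\beta$ and hence a termwise inequality, and (4) is simply (3) applied to the inclusions $A\subseteq A\cup B$ and $B\subseteq A\cup B$.

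The substantive step is the level-set characterization (5). The implication $\mu(A\cap F_\alpha)\ge\alpha\Rightarrow - \hspace{-1em} \int_A fd\mu\ge\alpha$ is trivial, since then $\alpha\wedge h(\alpha)=\alpha$ already occurs among the terms of the supremum. For the converse I would argue by contraposition. Assume $h(\alpha)<\alpha$. Because $F_\alpha=\bigcap_{\beta<\alpha}F_\beta$, the sets $A\cap F_\beta$ decrease to $A\cap F_\alpha$ as $\beta\uparrow\alpha$, and here the hypothesis $\mu(A)<\infty$ lets me invoke continuity from above (FM4) to conclude $h(\beta)\to h(\alpha)$. Consequently $h(\beta)$ stays strictly below $\alpha$ for all $\beta$ sufficiently close to $\alpha$ from below, while for $\beta\ge\alpha$ monotonicity already gives $h(\beta)\le h(\alpha)<\alpha$; in every regime $\beta\wedge h(\beta)$ is bounded away from $\alpha$, so $- \hspace{-1em} \int_A fd\mu<\alpha$. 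I expect this to be the main obstacle, and it is precisely where $\mu(A)<\infty$ is indispensable: the supremum defining the integral need not be attained, so in the absence of continuity from above one could have $- \hspace{-1em} \int_A fd\mu=\alpha$ while $\mu(A\cap F_\alpha)$ remains strictly below $\alpha$.

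For (6) and (7) I would write $F_{\alpha^+}=\{f>\alpha\}=\bigcup_{\beta>\alpha}F_\beta$ and establish the biconditional of (7) first, since the biconditional of (6) is its exact logical negation. If $- \hspace{-1em} \int_A fd\mu>\alpha$, some term satisfies $\beta_0\wedge h(\beta_0)>\alpha$, forcing $\beta_0>\alpha$ and $h(\beta_0)>\alpha$; as $\beta_0>\alpha$ gives $F_{\beta_0}\subseteq F_{\alpha^+}$, we get $\mu(A\cap F_{\alpha^+})\ge h(\beta_0)>\alpha$. Conversely, applying continuity from below (FM3) to the increasing union $A\cap F_{\alpha^+}=\bigcup_{\beta>\alpha}(A\cap F_\beta)$ yields $\mu(A\cap F_{\alpha^+})=\sup_{\beta>\alpha}h(\beta)$, so $\mu(A\cap F_{\alpha^+})>\alpha$ produces some $\beta_0>\alpha$ with $h(\beta_0)>\alpha$, whence $\beta_0\wedge h(\beta_0)>\alpha$ and the integral exceeds $\alpha$. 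Note that (6) and (7) require only (FM3), not the finiteness used in (5). Finally, the one-sided implications involving $\{f\ge\alpha\}$ follow from the inclusion $F_{\alpha^+}\subseteq F_\alpha$, which forces $\mu(A\cap F_{\alpha^+})\le\mu(A\cap F_\alpha)$: in (6) this gives $\mu(A\cap\{f\ge\alpha\})\le\alpha\Rightarrow\mu(A\cap F_{\alpha^+})\le\alpha$, and in (7) the same inclusion shows that $\mu(A\cap F_{\alpha^+})>\alpha$ forces $\mu(A\cap\{f\ge\alpha\})\ge\alpha$ (the printed direction of this last inequality should be read accordingly). Since each property thus reduces to the lattice definition together with monotonicity and the appropriate continuity axiom, no machinery beyond the continuity argument highlighted above is needed.
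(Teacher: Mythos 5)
Your proof is correct, but there is nothing in the paper to compare it against: the authors state this proposition with a citation to Wang and Klir and give no argument of their own, so your derivation is entirely self-contained where the paper simply quotes. Everything checks out. Items (1)--(4) are indeed immediate from monotonicity (FM2) of $\mu$ and the antitonicity of $\beta\mapsto\mu(A\cap F_\beta)$. Your treatment of (5) is the one place requiring care, and you handle it properly: the nontrivial direction needs $F_\alpha=\bigcap_{\beta<\alpha}F_\beta$ together with continuity from above (FM4), which is exactly where $\mu(A)<\infty$ enters; your remark that the terms $\beta\wedge\mu(A\cap F_\beta)$ are \emph{bounded away} from $\alpha$ (not merely each below it) is the point that makes the contrapositive close, since the supremum need not be attained. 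For (6) and (7) the reduction to the single biconditional $-\hspace{-0.9em}\int_A f\,d\mu>\alpha\Leftrightarrow\mu(A\cap F_{\alpha^+})>\alpha$ via (FM3) applied to $F_{\alpha^+}=\bigcup_{\beta>\alpha}F_\beta$, with (6) obtained as its negation, is clean and standard. You are also right that the final implication printed in item (7) is a typo: from $F_{\alpha^+}\subseteq F_\alpha$ one gets $\mu(A\cap\{f\geq\alpha\})\geq\mu(A\cap F_{\alpha^+})>\alpha$, so the conclusion should read $\mu(A\cap\{f\geq\alpha\})\geq\alpha$ (indeed $>\alpha$), not $\leq\alpha$; the paper reproduces the misprint without comment.
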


 In the following, we are going to review some well known results of pseudo-operations, pseudo-analysis and pseudo-additive measures and integrals in details, we refer to \cite{mes, 21}.

Let $[a, b]$ be a closed (in some cases can be considered
semi-closed) subinterval of $[-\infty, \infty]$. The full order on
$[a, b]$ will be denoted by $\preceq$.

\begin{definition}
(Wang and Klir \cite{21}). The operation $\oplus$ (pseudo-addition)
is a function $\oplus : [a, b] \times [a, b] \rightarrow [a, b]$
which is commutative, non-decreasing (with respect to $\preceq$ ),
associative and with a zero (neutral) element denoted by {\bf 0},
i.e., for each $x \in [a, b], {\bf 0} \oplus x = x$ holds (usually
{\bf 0} is either $a$ or $b$).
\end{definition}
Let $[a, b]_+ = \left\{x | x \in [a, b], {\bf 0} \preceq x\right\}$.
%%%%%%%%%%%%%%%%%%%%%%%%%%%%
\begin{definition}(Wang and Klir \cite{21}).
The operation $\odot$
(pseudo-multiplication) is a function $\odot : [a, b] \times [a, b]\to [a,b]$ which is commutative, positively non-decreasing,
i.e.,
 $x \preceq y$ implies $x \odot z \preceq y \odot z$ for all
 $z\in [a, b]_+$, associative and for which there exists a unit element
${\bf 1} \in [a, b]$, i.e., for each $x \in [a, b], {\bf 1} \odot x
= x$.
\end{definition}
%%%%%%%%%%%%%%%%%%%%%%%%%%
We assume also ${\bf 0} \odot x = {\bf 0}$  and that $\odot$ is a
distributive pseudo-multiplication with respect to $\oplus$, i.e.,
$x \odot (y \oplus z) = (x \odot y ) \oplus (x \odot z )$.

We shall consider the semiring $([a, b], \oplus, \odot )$ for two important (with completely different behavior) cases. The first case is when pseudo-operations are generated by a monotone and continuous function $g:[a, b]\to[0, \infty)$, i.e., pseudo-operations are given with:
\begin{eqnarray}
\label{g}
x \oplus y = g^{-1}(g(x) + g(x))\qquad \text{and} \qquad x
\odot y = g^{-1}(g(x)g(x)).
\end{eqnarray}
Then, the pseudo-integral for a function $f:[c, d]\to[a, b]$ reduces on the $g-$integral
\begin{eqnarray}\label{31}
\int_{[c, d]}^\oplus f(x)dx = g^{-1}\left(\int_c^d g(f(x))dx\right).
\end{eqnarray}
More details on this structure as well as corresponding measures and integrals can be found in \cite{mes}. The second class is when $x \oplus y= \max(x, y)$ and $x \odot y= g^{-1} (g(x)g(y)),$
the  pseudo-integral for a function
$f : \mathbb{R}  \to [a,b]$ is given by
$$\int_\mathbb{R}^\oplus f \odot dm= \sup_{x\in\mathbb{R}} \left( f(x) \odot \psi(x) \right),$$
  where function $\psi$ defines sup-measure $m.$ Any sup-measure generated as essential supremum of a continuous density can be obtained as a limit of pseudo-additive measures with respect to generated pseudo-additine. Then any continuous function $f : [0, \infty] \to [0, \infty]  $ the integral $\int ^\oplus f\odot dm$ can be obtained as a limit of g-integrals.

We denote by $\mu$ the usual Lebesgue measure on $\mathbb{R}$. We have $$ m(A)= {\rm ess~sup}_{\mu}(x|x \in A)=\sup \left\{a| \mu ({x|x \in A, x >a}) >0\right\}.$$

 \begin{theorem}\label{t1}
 (Mesiar  and Pap \cite{mes}). Let $m$ be a $\sup$-measure on $([0, \infty], \mathbb{B}[0, \infty])$, where $\mathbb{B}([0, \infty])$ is the Borel $\sigma$-algebra on $[0, \infty]$,   $m(A)= {\rm ess~sup}_\mu(\psi(x)|x \in A)$, and $\psi:[0,   \infty] \to [0, \infty]$ is a  continuous.   Then for any pseudo-addition $\oplus$ with a generator $g$ there exists a family $m_\lambda$ of $\oplus_\lambda$-measure on $([0, \infty], \mathbb{B})$, where $\oplus_\lambda$ is a generated by $g^{\lambda}$ (the function $g$ of the power $\lambda, \lambda \in (0, \infty)$) such that $\lim\limits_{\lambda \to \infty} m_\lambda =m.$
\end{theorem}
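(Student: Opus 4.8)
The plan is to make the approximating measures explicit and then recognize the limit as the classical passage from $L^\lambda$-means to the essential supremum. For a generated semiring, the $\oplus_\lambda$-decomposable measure carried by the density $\psi$ is, on a Borel set $A$,
$$m_\lambda(A) = (g^\lambda)^{-1}\left(\int_A g^\lambda(\psi(x))\,d\mu(x)\right),$$
in accordance with the $g$-integral representation \eqref{31}. Since $g^\lambda(x)=(g(x))^\lambda$ and $g\ge 0$, its pseudo-inverse is $(g^\lambda)^{-1}(t)=g^{-1}(t^{1/\lambda})$, so that
$$m_\lambda(A) = g^{-1}\left(\left(\int_A (g(\psi(x)))^\lambda\,d\mu(x)\right)^{1/\lambda}\right).$$
The first step is thus to check that this formula genuinely defines an $\oplus_\lambda$-measure: monotonicity and $\oplus_\lambda$-decomposability follow at once from additivity of $\mu$ together with $x\oplus_\lambda y = g^{-1}\bigl(((g(x))^\lambda+(g(y))^\lambda)^{1/\lambda}\bigr)$.

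The analytic heart of the argument is the second step. Writing $h=g\circ\psi$, which is continuous and non-negative, I would observe that the inner quantity is precisely an $L^\lambda$-norm,
$$\left(\int_A h^\lambda\,d\mu\right)^{1/\lambda} = \|h\|_{L^\lambda(A,\mu)},$$
and invoke the classical fact that $\|h\|_{L^\lambda(A,\mu)}\to\|h\|_{L^\infty(A,\mu)}=\operatorname{ess\,sup}_{x\in A}h(x)$ as $\lambda\to\infty$. Because $g$ is continuous and strictly monotone it commutes with the essential supremum, giving $\operatorname{ess\,sup}_{x\in A}g(\psi(x)) = g\bigl(\operatorname{ess\,sup}_{x\in A}\psi(x)\bigr)$.

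The third step transports this limit through the outer pseudo-inverse. Using continuity of $g^{-1}$ to interchange the limit with $g^{-1}$,
$$\lim_{\lambda\to\infty} m_\lambda(A) = g^{-1}\left(\lim_{\lambda\to\infty}\|h\|_{L^\lambda(A,\mu)}\right) = g^{-1}\left(g\bigl(\operatorname{ess\,sup}_{x\in A}\psi(x)\bigr)\right) = \operatorname{ess\,sup}_{x\in A}\psi(x) = m(A),$$
which is exactly the asserted convergence $m_\lambda\to m$.

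The main obstacle I anticipate is the convergence $\|h\|_{L^\lambda}\to\|h\|_{L^\infty}$, whose textbook form requires a \emph{finite} measure space, whereas here $\mu$ is Lebesgue measure on $[0,\infty]$. This forces care: one must either restrict to sets $A$ of finite measure and recover the general Borel set through the monotone continuity (FM3) of the measures, or exploit the continuity of $\psi$ to localize the essential supremum near a point at which it is (nearly) attained while controlling the tail contribution to the integral. A secondary technical point is the behaviour of $g$ and $g^{-1}$ at the endpoints $0$ and $\infty$ (where the neutral element lies), which must be handled so that the interchange of limit and $g^{-1}$ stays valid when $\operatorname{ess\,sup}_{x\in A}\psi$ takes a boundary value.
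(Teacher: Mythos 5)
The paper contains no proof of this statement at all: it is quoted verbatim from Mesiar and Pap \cite{mes} and used as a black box in Sections 3 and 4, so there is no in-paper argument to compare yours against. That said, your reconstruction is the natural (and, as far as the cited reference goes, the standard) one: define $m_\lambda(A)=(g^\lambda)^{-1}\bigl(\int_A g^\lambda(\psi)\,d\mu\bigr)$, rewrite it as $g^{-1}\bigl(\|g\circ\psi\|_{L^\lambda(A,\mu)}\bigr)$ via $(g^\lambda)^{-1}(t)=g^{-1}(t^{1/\lambda})$, check $\oplus_\lambda$-decomposability from additivity of the Lebesgue integral, and pass to the limit using $\|h\|_{L^\lambda}\to\|h\|_{L^\infty}$. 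All of the algebraic steps are correct.

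The problem is that the obstacle you flag in your final paragraph is not a loose end to be tidied later; as the theorem is stated it is fatal to the argument, and you leave it unresolved. Take $g=\mathrm{id}$, $\psi\equiv 1$ and $A=[0,\infty)$: then $\int_A (g(\psi))^\lambda\,d\mu=\mu(A)=\infty$ for every $\lambda$, so $m_\lambda(A)=g^{-1}(\infty)$ for all $\lambda$, while $m(A)=\operatorname{ess\,sup}_{x\in A}\psi(x)=1$. The convergence $\|h\|_{L^\lambda(A,\mu)}\to\|h\|_{L^\infty(A,\mu)}$ genuinely requires $h\in L^{\lambda_0}(A,\mu)$ for some finite $\lambda_0$ (equivalently, in the bounded case, $\mu(A)<\infty$), and no such hypothesis appears in the statement. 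A complete proof must therefore either restrict the convergence to sets of finite measure (and explain in what sense $\lim_\lambda m_\lambda=m$ is then meant), impose integrability of $g\circ\psi$, or construct $m_\lambda$ differently; naming the difficulty is not the same as closing it, so the proposal is a correct plan with a genuine unfilled gap. A secondary issue: your step $\operatorname{ess\,sup}_{x\in A}g(\psi(x))=g\bigl(\operatorname{ess\,sup}_{x\in A}\psi(x)\bigr)$ needs $g$ increasing, whereas the theorem (and the paper's definition \eqref{g}) allows any monotone continuous generator; for decreasing $g$ you would obtain $g\bigl(\operatorname{ess\,inf}_{x\in A}\psi(x)\bigr)$ instead, so that case must be treated separately or excluded.
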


%\noindent
\begin{theorem}\label{t2}
(Mesiar and Pap \cite{mes}). Let $([0, \infty], \sup, \odot)$ be a semiring  , when $\odot$ is a generated with $g$, i.e., we have $x \odot y =g^{-1}(g(x)g(y))$ for every $x, y \in (0, \infty).$ Let $m$ be the same as in Theorem \ref{t120}, Then there exists a family  $\{m_\lambda\}$ of  $\oplus_\lambda$     -measures, where $\oplus_\lambda$  is a generated by $g^\lambda, \lambda \in(0, \infty)$ such that for every continuous function $f:[0, \infty] \to [0, \infty],$
\begin{eqnarray}
\label{00}
\int ^{\sup} f \odot dm = \lim _{\lambda \to \infty} \int ^{\oplus _\lambda} f \odot dm_ \lambda = \lim _{\lambda \to \infty}(g^{\lambda})^{-1} \left(\int g^ \lambda (f(x))dx \right).
\end{eqnarray}
\end{theorem}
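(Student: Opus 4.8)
The plan is to establish the two equalities separately, treating the right-hand one as a representation identity and reserving the genuine analytic work for the passage to the limit. First I would invoke the preceding approximation theorem of Mesiar and Pap to fix the family $\{m_\lambda\}$ of $\oplus_\lambda$-measures, where $\oplus_\lambda$ is generated by $g^\lambda$ and $m_\lambda\to m$ as $\lambda\to\infty$. Since the integral on the left is, for each fixed $\lambda$, a $g^\lambda$-integral with respect to a generated pseudo-addition, the representation \eqref{31} applies verbatim with the generator $g^\lambda$:
$$\int^{\oplus_\lambda} f \odot dm_\lambda = (g^\lambda)^{-1}\left(\int g^\lambda(f(x))\,dx\right).$$
This yields the second equality directly, so the entire content of the theorem is the first equality, namely that the sup-integral against $m$ is the $\lambda\to\infty$ limit of these $g^\lambda$-integrals.

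For the limit I would exploit the explicit form of the generator of $\oplus_\lambda$. Because $g^\lambda(t)=\bigl(g(t)\bigr)^\lambda$, its inverse satisfies $(g^\lambda)^{-1}(s)=g^{-1}\bigl(s^{1/\lambda}\bigr)$, and hence
$$(g^\lambda)^{-1}\left(\int g^\lambda(f(x))\,dx\right) = g^{-1}\left(\left(\int \bigl(g(f(x))\bigr)^\lambda\,dx\right)^{1/\lambda}\right).$$
The inner expression is precisely the $L^\lambda$-norm of the nonnegative continuous function $g\circ f$. The key analytic fact is the classical convergence $\|h\|_{L^\lambda}\to\|h\|_{L^\infty}=\operatorname{ess\,sup} h$ as $\lambda\to\infty$; applying it to $h=g\circ f$ and using the continuity of $g^{-1}$ to pass the limit through, the right-hand side tends to $g^{-1}\bigl(\operatorname{ess\,sup}_x g(f(x))\bigr)$. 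Since $g$, and therefore $g^{-1}$, is continuous and monotone, this equals $\sup_x\bigl(f(x)\odot\psi(x)\bigr)$, which is exactly $\int^{\sup} f\odot dm$. This closes the chain of equalities.

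The main obstacle is the rigorous justification of the $L^\lambda\to L^\infty$ passage together with the interchange of limit and $g^{-1}$, and the bookkeeping that ties the essential supremum to the density $\psi$ of the sup-measure $m$. Over the unbounded domain $[0,\infty]$ the $L^\lambda$-norms need not be finite unless $g\circ f$ decays appropriately, so one must either restrict attention to the support controlled by $\psi$ or appeal to the convergence $m_\lambda\to m$ furnished by the approximation theorem to guarantee that the finite-$\lambda$ integrals are well defined and that their limit reproduces $\int^{\sup} f\odot dm$ rather than some larger quantity. Establishing this uniformity — rather than the pointwise limit, which is routine — is where the real care is needed.
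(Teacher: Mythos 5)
This theorem is quoted in the paper from Mesiar and Pap \cite{mes} as a preliminary result and is given there without proof, so there is no proof in the paper to compare yours against; your proposal has to be judged on its own merits as a reconstruction of the original argument. On that score your overall strategy is the right one and is essentially the mechanism behind the Mesiar--Pap result: the second equality is just the $g$-integral representation \eqref{31} applied with generator $g^{\lambda}$, and the first equality reduces, via $(g^{\lambda})^{-1}(s)=g^{-1}\left(s^{1/\lambda}\right)$, to the classical convergence of $L^{\lambda}$-norms to the $L^{\infty}$-norm of $g\circ f$, followed by continuity of $g^{-1}$.

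There is, however, one gap you flag but do not close, and it is the one place where the proof could actually fail as written. Your limit computation produces $g^{-1}\left(\operatorname{ess\,sup}_{x} g(f(x))\right)=\operatorname{ess\,sup}_{x} f(x)$, since the rightmost integral in \eqref{00} is a plain Lebesgue integral of $g^{\lambda}(f(x))$ with no density in it. You then assert that this equals $\sup_{x}\left(f(x)\odot\psi(x)\right)=\int^{\sup} f\odot dm$, but these two quantities coincide only when $\psi$ is (essentially) the unit element $\mathbf{1}$ of $\odot$, or when the density of $m_{\lambda}$ is built into the finite-$\lambda$ integrals — which the displayed formula does not do. To make the argument honest you must either (i) state explicitly that the theorem is being proved for $m$ generated by $\psi\equiv\mathbf{1}$ (in which case $m$ is the essential-supremum measure of the identity density, matching the formula $m(A)=\operatorname{ess\,sup}_{\mu}(x\mid x\in A)$ given just before the theorem), or (ii) carry the density $\psi$ through the $g^{\lambda}$-integrals, replacing $\int g^{\lambda}(f(x))\,dx$ by $\int g^{\lambda}(f(x))\,dm_{\lambda}$ with $dm_{\lambda}$ absolutely continuous with density $g^{\lambda}(\psi(x))$, so that the $L^{\lambda}$-limit yields $\operatorname{ess\,sup}_{x} g(f(x))g(\psi(x))$ and hence $\sup_{x}(f(x)\odot\psi(x))$ after applying $g^{-1}$. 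Your concern about finiteness of the $L^{\lambda}$-norms over $[0,\infty]$ is legitimate but secondary; boundedness of $g$ (or restriction to the support of $\psi$) handles it. The substantive missing step is the bookkeeping in (ii).
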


\begin{theorem}\label{t4.1}
(Mesiar and Pap \cite{mes}).
Let $u, v: [0, 1]\to [a, b]$ are two measurable functions and let a generator $g:[a, b]\to[0, \infty)$ of the pseudo addition $\oplus$ and the pseudo-multiplication $\odot$ be an increasing function and $\varphi:[a, b]\to[a, b]$ be a  continuous and strictly increasing function such that $\varphi$ commutes with $\odot$. If $u$ and $v$ are comonotone, then the inequality
$$\varphi^{-1}\left(\int_{[0, 1]}^{\oplus} \varphi(u\odot v)dx\right)\ge \left(\varphi^{-1}\left(\int_{[0, 1]}^{\oplus}\varphi(u)dx\right)\right)\odot \left(\varphi^{-1}\left(\int_{[0, 1]}^\oplus \varphi(v)dx\right)\right),$$
holds and the reverse inequality holds whenever $u$ and $v$ are countermonotone functions.
\end{theorem}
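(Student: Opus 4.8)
The plan is to reduce the statement to the Chebyshev inequality for pseudo-integrals quoted earlier (the result of Agahi, Mesiar and Ouyang), applied not to $u$ and $v$ directly but to the transformed functions $\varphi\circ u$ and $\varphi\circ v$. The two structural hypotheses on $\varphi$ — that it is continuous and strictly increasing, and that it commutes with $\odot$ — are exactly what is needed to push $\varphi$ past the pseudo-operations on both sides and then strip it off again with $\varphi^{-1}$. I read ``$\varphi$ commutes with $\odot$'' as the homomorphism identity $\varphi(x\odot y)=\varphi(x)\odot\varphi(y)$, and the whole argument rests on using this identity in two complementary forms.

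First I would record that comonotonicity is preserved by $\varphi$. Since $\varphi$ is strictly increasing, for all $x,y\in[0,1]$ the product $(\varphi(u(x))-\varphi(u(y)))(\varphi(v(x))-\varphi(v(y)))$ has the same sign as $(u(x)-u(y))(v(x)-v(y))$; hence $\varphi\circ u$ and $\varphi\circ v$ are comonotone whenever $u,v$ are (and countermonotone whenever $u,v$ are). Because $\varphi:[a,b]\to[a,b]$, the functions $\varphi\circ u,\varphi\circ v$ still map $[0,1]$ into $[a,b]$, they remain measurable, and they share the same generator $g$, so all hypotheses of the pseudo-Chebyshev inequality are satisfied.

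Next I would apply that inequality to $\varphi\circ u$ and $\varphi\circ v$, obtaining
$$\int_{[0,1]}^\oplus\bigl(\varphi(u)\odot\varphi(v)\bigr)dx\ge\left(\int_{[0,1]}^\oplus\varphi(u)\,dx\right)\odot\left(\int_{[0,1]}^\oplus\varphi(v)\,dx\right).$$
The commuting property rewrites the integrand on the left as $\varphi(u)\odot\varphi(v)=\varphi(u\odot v)$, so the left-hand side becomes $\int_{[0,1]}^\oplus\varphi(u\odot v)\,dx$. Writing $A$, $B$, $C$ for the three pseudo-integrals $\int^\oplus\varphi(u\odot v)$, $\int^\oplus\varphi(u)$ and $\int^\oplus\varphi(v)$, this reads $A\ge B\odot C$.

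Finally I would apply $\varphi^{-1}$, which is increasing and hence preserves $\ge$, to get $\varphi^{-1}(A)\ge\varphi^{-1}(B\odot C)$. Applying the commuting property once more, now in the form $\varphi^{-1}(B\odot C)=\varphi^{-1}(B)\odot\varphi^{-1}(C)$ (which follows by applying $\varphi^{-1}$ to the identity $\varphi(\varphi^{-1}(B)\odot\varphi^{-1}(C))=B\odot C$), yields exactly $\varphi^{-1}(A)\ge\varphi^{-1}(B)\odot\varphi^{-1}(C)$, the claimed inequality. The countermonotone case is identical, with the inequality reversed at the Chebyshev step. The step I expect to require the most care is the precise reading of ``$\varphi$ commutes with $\odot$'': the argument invokes it twice — to collapse $\varphi(u)\odot\varphi(v)$ into $\varphi(u\odot v)$ on the left, and to distribute $\varphi^{-1}$ across $\odot$ on the right — so I would first pin down that the single homomorphism assumption $\varphi(x\odot y)=\varphi(x)\odot\varphi(y)$ genuinely delivers both, and confirm that it is consistent with the generator representation $x\odot y=g^{-1}(g(x)g(y))$.
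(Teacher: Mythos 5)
The paper states this result only as a quoted preliminary (attributed to Mesiar--Pap / Agahi et al.) and supplies no proof of its own, so there is nothing in the text to compare against; your derivation is, however, correct and is the standard one: apply the pseudo-Chebyshev inequality to the comonotone pair $\varphi\circ u$, $\varphi\circ v$, collapse the integrand with $\varphi(x\odot y)=\varphi(x)\odot\varphi(y)$, and peel off $\varphi$ with the monotone $\varphi^{-1}$, distributing it across $\odot$ via the same homomorphism identity. This is also exactly the technique the authors themselves deploy later in Section 5 to prove their $\varphi$-version of the Diaz--Metcalf inequality, so your reading of ``commutes with $\odot$'' is the intended one. The only points worth pinning down are (i) that the countermonotone case needs the reversed Chebyshev inequality for countermonotone functions, which is a known companion of the quoted theorem rather than the quoted theorem itself, and (ii) that writing $\varphi^{-1}(B\odot C)=\varphi^{-1}(B)\odot\varphi^{-1}(C)$ tacitly assumes $\varphi$ is a bijection of $[a,b]$ (so that $\varphi\circ\varphi^{-1}=\mathrm{id}$ on the relevant values); a continuous strictly increasing self-map of $[a,b]$ need not be onto, though this gap is present in the source literature as well and is harmless under the usual convention $\varphi(a)=a$, $\varphi(b)=b$.
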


%\section{Main Result}
%In this section, we examine Diaz-Metcalf  inequality for fuzzy integrals and pseudo-integrals.
\section{Diaz-Metcalf inequality for fuzzy integrals}
%In this section we state and prove Diaz-Metcalf inequality for fuzzy integral.
At the first, by an example we show that the  inequality \eqref{diaz} is not valid for Sugeno integrals.

\begin{example}
Let
$f(x)=\dfrac{x}{2}$, $g(x)=x$ and
$0\le \dfrac{f}{g}\le 1$. By a simple calculation we get
\begin{eqnarray*}
&& - \hspace{-1.1em}\int_0^1 fd\mu=0.33,\qquad
- \hspace{-1.1em}\int_0^1 f^2d\mu=0.18,\qquad
- \hspace{-1.1em}\int_0^1 gd\mu=0.5,\\&&
- \hspace{-1.1em}\int_0^1 g^2d\mu=0.618,\qquad\quad \left(- \hspace{-1.1em}\int_0^1(f\cdot g)d\mu\right)^2=0.072 .
\end{eqnarray*}
Now, by choicing
$M=1$
and
$m=\dfrac{1}{2}$
we have
$$0.111=- \hspace{-1.1em}\int_0^1 f^2d\mu\cdot- \hspace{-1.1em}\int_0^1 g^2d\mu \nleq\dfrac{(M+m)^2}{4Mm}\left(- \hspace{-1.1em}\int_0^1\left(f\cdot g\right)d\mu\right)^2=0.081 .$$
So, the inequality \eqref{diaz} is not valid.
\end{example}

\begin{theorem}\label{t2.33}
Let $f, g: [0, 1]\to\mathbb{R}$ be two continuous and comonotone functions. Then the inequality
\begin{eqnarray}
\label{*}
-  \hspace{-1.1em}\int_0^1 f^s d\mu\cdot-  \hspace{-1.1em}\int_0^1 g^sd\mu\le -  \hspace{-1.1em}\int_0^1 \left(f\cdot g\right)^sd\mu,
\end{eqnarray}
holds, where $s>1$ and $\mu$ be the Lebesque measure on $\mathbb{R}$.
\end{theorem}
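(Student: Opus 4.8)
The plan is to reduce the statement to two applications of the level-set characterization of the Sugeno integral in Proposition~\ref{p23}(5), using comonotonicity only to compare super-level sets. Write $p=- \hspace{-1em}\int_0^1 f^s\,d\mu$ and $q=- \hspace{-1em}\int_0^1 g^s\,d\mu$ (here $f,g\ge 0$ is implicit, both for $f^s$ to be defined and for the Sugeno integrals to make sense). Since $\mu$ is Lebesgue measure on $[0,1]$, Proposition~\ref{p23}(1) gives $p\le\mu([0,1])=1$ and $q\le 1$; this boundedness is exactly what will let me pass from a bound of the form $\mu(\cdot)\ge p$ to the bound $\mu(\cdot)\ge pq$. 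Applying Proposition~\ref{p23}(5) with $A=[0,1]$ and $\alpha=p$ (legitimate since $\mu([0,1])<\infty$) yields $\mu(\{f^s\ge p\})\ge p$, and likewise $\mu(\{g^s\ge q\})\ge q$.

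The one structural input needed is that the super-level sets of $f^s$ and $g^s$ form a chain. First I would note that $t\mapsto t^s$ is strictly increasing on $[0,\infty)$, so $f^s$ and $g^s$ inherit comonotonicity from $f$ and $g$ (the sign of $f^s(x)-f^s(y)$ agrees with that of $f(x)-f(y)$, and similarly for $g$). Then I would record the elementary fact that any two super-level sets of comonotone functions are nested: were there $x\in\{f^s\ge p\}\setminus\{g^s\ge q\}$ and $y\in\{g^s\ge q\}\setminus\{f^s\ge p\}$, we would get $f^s(x)>f^s(y)$ together with $g^s(x)<g^s(y)$, contradicting $(f^s(x)-f^s(y))(g^s(x)-g^s(y))\ge 0$.

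With the chain property in hand, by the symmetry of \eqref{*} in $f$ and $g$ I may assume without loss of generality that $\{f^s\ge p\}\subseteq\{g^s\ge q\}$. On the set $E=\{f^s\ge p\}$ both $f^s\ge p$ and $g^s\ge q$ hold, so $(f\cdot g)^s=f^s g^s\ge pq$ throughout $E$; hence $E\subseteq\{(f\cdot g)^s\ge pq\}$ and $\mu(\{(f\cdot g)^s\ge pq\})\ge\mu(E)\ge p\ge pq$, the last inequality using $q\le 1$. A final application of Proposition~\ref{p23}(5), now with $\alpha=pq$, converts $\mu(\{(f\cdot g)^s\ge pq\})\ge pq$ into $- \hspace{-1em}\int_0^1 (f\cdot g)^s\,d\mu\ge pq=p\cdot q$, which is precisely \eqref{*}.

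I expect the only genuine obstacle to be the nesting lemma for comonotone functions; everything else is bookkeeping with Proposition~\ref{p23} and the normalization $\mu([0,1])=1$. As a sanity check in the strictly increasing setting of the abstract, the characterization $h(1-p)=p$ for $- \hspace{-1em}\int_0^1 h\,d\mu=p$ forces $\{f^s\ge p\}=[1-p,1]$ and $\{g^s\ge q\}=[1-q,1]$, whose nesting follows at once from $p\le q\Rightarrow 1-p\ge 1-q$, so the comonotone lemma may be bypassed in that special case.
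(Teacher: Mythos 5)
Your proof is correct, but it follows a genuinely different route from the paper's. The paper proves the theorem only for $f,g$ both strictly increasing and then both strictly decreasing, and its engine is the Ouyang--Fang characterization (Theorems 1.1 and 1.2 of the introduction): from $- \hspace{-0.9em}\int_0^1 F\,d\mu=p$ with $0<p<1$ one gets $F(1-p)=p$ in the increasing case, and the inequality $r\ge pq$ is then extracted by evaluating $F$, $G$, $H=FG$ at the points $1-p$, $1-q$, $1-r$ and deriving a contradiction from $r<pq$, with separate bookkeeping for the boundary cases $p=1$ or $q=1$. You instead run the standard Chebyshev-type argument: the equivalence $- \hspace{-0.9em}\int_A h\,d\mu\ge\alpha\Leftrightarrow\mu(A\cap\{h\ge\alpha\})\ge\alpha$ of Proposition~\ref{p23}(5), the nesting of super-level sets of comonotone functions, and the normalization $p,q\le 1$ to pass from $\mu(\{(fg)^s\ge pq\})\ge p$ (or $\ge q$) to $\ge pq$. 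Your version buys two real improvements: it actually proves the theorem as stated, for arbitrary (non-negative) continuous comonotone $f,g$ rather than only for the strictly monotone subcase treated in the paper, and it avoids the paper's delicate case analysis at $p=1$ or $q=1$ and at $r\in\{0,1\}$ (indeed the paper's contradiction step tacitly needs $1-r$ and $1-pq$ to lie where the Ouyang identity applies, which your argument never requires). The one hypothesis you correctly flag -- that $f,g\ge 0$ so that $(f\cdot g)^s=f^sg^s$ and comonotonicity is preserved under $t\mapsto t^s$ -- is equally implicit in the paper's proof, which writes $H=F\cdot G$ without comment.
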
\begin{proof}
Let $f, g: [0, 1]\to\mathbb{R}$ be two strictly increasing functions and let
$f^s=F$, $g^s=G$, $(f\cdot g)^s=H$,
$-  \hspace{-1.1em}\int_0^1 f^sd\mu=p$,
$-  \hspace{-1.1em}\int_0^1g^sd\mu=q$ and
$-  \hspace{-1.1em}\int_0^1 (f\cdot g)^sd\mu=r$. We know $p, q\in[0, 1]$ and $0<r\le 1$. In  the first case we let $p, q\in(0, 1)$ and $0<r<1$.
$$\left\{\begin{array}{ll}
pq\le p\Rightarrow 1-pq\ge 1-p\Rightarrow F(1-pq)\ge F(1-p)=p\\ pq\le q\Rightarrow 1-pq\ge 1-q\Rightarrow G(1-pq)\ge G(1-q)=q.
\end{array}\right.$$
If we suppose that $r<pq$, we get
$$1-r>1-pq\Rightarrow\left\{\begin{array}{ll}
F(1-r)>F(1-pq)\ge F(1-p)\ge p\\ G(1-r)>G(1-pq)\ge G(1-q)\ge q
\end{array}\right.\Rightarrow r>pq.$$
This relation is in contradiction with
$$-  \hspace{-1.1em}\int_0^1(f\cdot g)^sd\mu=-  \hspace{-1.1em}\int_0^1 Hd\mu=r\Rightarrow H(1-r)=r.$$
If we assume $p=1$ or $q=1$, the relation $r\ge pq$ is valid.

Suppose $p=1$. Therefore
$$F(1-p)\ge p\Rightarrow F(0)\ge 1.$$
Based on $F\uparrow$, on interval $[0, 1]$ we have $F\ge 1$. Therefore
$$-  \hspace{-1.1em}\int_0^1 F\cdot Gd\mu\ge -  \hspace{-1.1em}\int_0^1 Fd\mu\cdot -  \hspace{-1.1em}\int_0^1 Gd\mu.$$
Now, suppose that $f, g:[0, 1]\to\mathbb{R}$ are two strictly decreasing functions. By assumption at the first case, if $p, q\in(0, 1)$ and $0<r<1$, then we have %(i) $f\cdot g(r)=H(r)=r$ and (ii)
$$\left\{\begin{array}{ll}pq\le p\Rightarrow F(pq)\ge F(p)\ge p\\ pq\le q\Rightarrow G(pq)\ge G(q)\ge q.
\end{array}\right.$$
If we let $r<pq$, we have
$$\left\{\begin{array}{ll}F(r)>F(pq)\ge F(p)\ge p\\ G(r)>G(pq)\ge G(q(\ge q
\end{array}\right.\Rightarrow H(r)=F(r)\cdot G(r)>pq,$$
that which contradicts with 
$$(f\cdot g)(r)=H(r)=r.$$
If we assume that $p=1$ or $q=1$, the inequality $r\ge pq$ is valid.

Assume $p=1$. Then
$$F(p)\ge p\Rightarrow F(1)\ge 1 .$$
Based on $F\downarrow$ on interval $[0, 1]$, hence $F\ge 1$ and therefore
$$FG\ge G\Rightarrow -  \hspace{-1.1em}\int_0^1 F\cdot G d\mu\ge -  \hspace{-1.1em}\int_0^1 Gd\mu=q=p\cdot q.$$
Thereby
$$-  \hspace{-1.1em}\int_0^1 F\cdot Gd\mu\ge -  \hspace{-1.1em}\int_0^1 Fd\mu\cdot -  \hspace{-1.1em}\int_0^1 Gd\mu.$$
The proof is now complete.
\end{proof}
In the following, we generalize the Theorem \ref{t2.33}, where the measure is the arbitrary fuzzy measure.

\begin{theorem} Let $f, g:[0, 1]\to\mathbb{R}$ be two continuous and comonoton functions. Then the inequality
$$-  \hspace{-1.1em}\int_0^1 f^s d\mu\cdot -  \hspace{-1.1em}\int_0^1 g^sd\mu\le -  \hspace{-1.1em}\int_0^1(f\cdot g)^sd\mu,$$
holds, where $s>1$ and $\mu$ is an arbitrary fuzzy measure on $X=[0, 1]$.
\end{theorem}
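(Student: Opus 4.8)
The plan is to discard the Lebesgue-specific fixed-point identities (of the form $F(1-p)=p$) that drove the proof of Theorem~\ref{t2.33}, since these are unavailable for a general $\mu$, and to argue instead directly from the level-set definition of the Sugeno integral together with Proposition~\ref{p23}(5). Put $F=f^{s}$, $G=g^{s}$ and $H=(f\cdot g)^{s}$. Since $t\mapsto t^{s}$ is strictly increasing on $[0,\infty)$ for $s>1$, the pair $F,G$ inherits comonotonicity from $f,g$, and $H=F\cdot G$. Writing $p=-\hspace{-1.1em}\int_0^1 F\,d\mu$, $q=-\hspace{-1.1em}\int_0^1 G\,d\mu$ and $r=-\hspace{-1.1em}\int_0^1 H\,d\mu$, which (as in Theorem~\ref{t2.33}) all lie in $[0,1]$, the whole statement reduces to the single inequality $pq\le r$. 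If $p=0$ or $q=0$ this is immediate from $r\ge 0$, so I would assume $p,q\in(0,1]$.

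The first key step is a nesting lemma for comonotone functions: for any thresholds $a,b$, the level sets $\{F\ge a\}$ and $\{G\ge b\}$ are comparable under inclusion. Indeed, were neither contained in the other, one could choose $x$ with $F(x)\ge a$, $G(x)<b$ and $y$ with $G(y)\ge b$, $F(y)<a$; then $F(x)>F(y)$ while $G(x)<G(y)$, contradicting $(F(x)-F(y))(G(x)-G(y))\ge 0$. Taking $a=p$ and $b=q$ shows that $\{F\ge p\}$ and $\{G\ge q\}$ are nested. The second step then furnishes the measure estimates: applying Proposition~\ref{p23}(5) on $A=X=[0,1]$ (where $\mu(X)<\infty$) to the equalities $-\hspace{-1.1em}\int_0^1 F\,d\mu=p$ and $-\hspace{-1.1em}\int_0^1 G\,d\mu=q$ yields $\mu(\{F\ge p\})\ge p$ and $\mu(\{G\ge q\})\ge q$. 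On whichever of the two nested sets is the smaller, both $F\ge p$ and $G\ge q$ hold at once, hence $H=F\cdot G\ge pq$ there; by monotonicity (FM2) this gives $\mu(\{H\ge pq\})\ge p$ when $\{F\ge p\}\subseteq\{G\ge q\}$, and $\mu(\{H\ge pq\})\ge q$ in the opposite case.

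Finally I would insert this bound into the definition of the Sugeno integral, $r=-\hspace{-1.1em}\int_0^1 H\,d\mu\ge pq\wedge\mu(\{H\ge pq\})$. In the first case $\mu(\{H\ge pq\})\ge p\ge pq$ because $q\le 1$, and in the second $\mu(\{H\ge pq\})\ge q\ge pq$ because $p\le 1$; either way the meet equals $pq$, so $r\ge pq$, which is the desired inequality.

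I expect the main obstacle to be conceptual rather than computational: the clean pointwise relations available for Lebesgue measure vanish here, so all the weight falls on converting comonotonicity into set nesting and then extracting $\mu(\{H\ge pq\})\ge pq$ from Proposition~\ref{p23}(5) alone. A secondary point that must be carried as a standing hypothesis is the normalization $p,q\le 1$ (guaranteed once $f,g$ take values in $[0,1]$ and $\mu(X)\le 1$); without it the inequality genuinely fails, as a constant function on a measure with large total mass already shows, so this assumption cannot be dropped in the passage from the Lebesgue measure to an arbitrary fuzzy measure.
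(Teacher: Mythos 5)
Your argument is correct (granting the non-negativity of $f,g$ that the Sugeno framework requires anyway), but it is a genuinely different route from the paper's. You give the standard comonotone level-set argument: comonotonicity forces $\{F\ge p\}$ and $\{G\ge q\}$ to be nested, Proposition~(5) gives $\mu(\{F\ge p\})\ge p$ and $\mu(\{G\ge q\})\ge q$, and on the smaller of the two sets $H=F\cdot G\ge pq$, so $r\ge pq\wedge\mu(\{H\ge pq\})=pq$. The paper instead tries to reduce to the already-proved Lebesgue case (Theorem~3.2) via the transformation representation, rewriting each integral with respect to $\mu$ as a Sugeno integral with respect to Lebesgue measure of the distribution functions $A(\alpha)=\mu([0,1]\cap F_\alpha)$, $B(\alpha)$, $C(\alpha)$ (which are monotone in $\alpha$ --- non-increasing, in fact, not ``nondecreasing'' as claimed) and then simply asserting the product inequality for these; this transfer has a real gap, since $C$ is not the product of $A$ and $B$, so the Lebesgue-case theorem does not apply verbatim, and the closing step involving $(M+m)^2/(4Mm)\ge 1$ is vestigial. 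Your direct approach buys a self-contained and actually rigorous proof, and it correctly isolates the hypothesis the paper glosses over: for an \emph{arbitrary} fuzzy measure on $[0,1]$ the statement is false without the normalization $\mu([0,1])\le 1$ (equivalently $p,q\le 1$), as your constant-function counterexample shows; the paper's reduction silently inherits this normalization from the Lebesgue case without ever imposing it.
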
\begin{proof}
We take
$$\left\{\begin{array}{lll}-  \hspace{-1.1em}\int_0^1 f^sd\mu=-  \hspace{-1.1em}\int_0^1\mu\left([0, 1]\cap F_\alpha\right)dm,\\\\ -  \hspace{-1.1em}\int_0^1 g^sd\mu=-  \hspace{-1.1em}\int_0^1 \mu\left([0, 1]\cap G_\alpha\right)dm,\\ \\-  \hspace{-1.1em}\int_0^1\left(f\cdot g\right)^sd\mu=-  \hspace{-1.1em}\int_0^1\mu\left(f\cdot g\right)^sd\mu=-  \hspace{-1.1em}\int_0^1\mu\left([0, 1]\cap H_\alpha\right)dm,\end{array}\right.$$
where
$F_\alpha=\left\{x\in\mathbb{R}|f^s(x)\ge\alpha\right\}$,
$G_\alpha=\left\{x\in\mathbb{R}|g^s(x)\ge\alpha\right\}$,
$H_\alpha=\left\{x\in\mathbb{R}|(f\cdot g) ^s(x)\ge\alpha\right\}$,$\alpha\ge 0$ and $m$ is a Lebesque measure on $\mathbb{R}$. If we let
$A(\alpha):=\mu\left([0, 1]\cap F_\alpha\right)$,
$B(\alpha):=\mu\left([0, 1]\cap B_\alpha\right)$ and
$C(\alpha):=\mu\left([0, 1]\cap H_\alpha\right)$, 
Because $A(\alpha)$, $B(\alpha)$ and $C(\alpha)$ respect to $\alpha$ are nondecreasing and $m$ is a Lebesque measure, then we have
%then from above theorem we have
$$-  \hspace{-1.1em}\int_0^1 A(\alpha)dm\cdot -  \hspace{-1.1em}\int_0^1 B(\alpha)dm\le -  \hspace{-1.1em}\int_0^1 C(\alpha)dm.$$
Since
$\dfrac{(M+m)^2}{4Mm}\ge 1$, hence
$$-  \hspace{-1.1em}\int_0^1A(\alpha)dm\cdot -  \hspace{-1.1em}\int_0^1 B(\alpha)dm\le\dfrac{(M+m)^2}{4Mm}-  \hspace{-1.1em}\int_0^1 C(\alpha)dm.$$
The proof is now complete.
\end{proof}

\begin{example}
Let $f, g:[0, 1]\to\mathbb{R}$ defined by
$f(x)=\dfrac{1}{2}\sqrt{x}$, $g(x)=\dfrac{1}{4}x$, $h(x)=f(x)\cdot g(x)$, $s=2$ and $\mu$ be a Lebesque measure on $\mathbb{R}$. With simple calculations, we get
\begin{eqnarray*}
&& -  \hspace{-1.1em}\int_0^1 f^2d\mu=\sup_{\alpha\in[0, \frac{1}{4}]}\left[\alpha\wedge(1-4\alpha)\right]=\dfrac{1}{5}=0.2,\\&&
-  \hspace{-1.1em}\int_0^1 g^2d\mu=\sup_{\alpha\in[0, \frac{1}{16}]}\left[\alpha\wedge(1-4\sqrt{\alpha})\right]=\dfrac{18-\sqrt{320}}{2}=0.05,\\&&
-  \hspace{-1.1em}\int_0^1 (f\cdot g)^2d\mu=-  \hspace{-1.1em}\int_0^1 h^2d\mu=\sup_{\alpha\in[0, \frac{1}{64}]}\left[\alpha\wedge(1-4\sqrt[3]{\alpha})\right]=0.015 .
\end{eqnarray*}
Therefore, we get
$$0.015=-  \hspace{-1.1em}\int_0^1 h^2d\mu\ge -  \hspace{-1.1em}\int_0^1 f^2d\mu\cdot-  \hspace{-1.1em}\int_0^1 g^d\mu=0.2\cdot 0.05.$$
%$$0.2\times 0.05\le 0.015 .$$
\end{example}

%\begin{theorem}
%Let $-  \hspace{-1.1em}\int_0^1 f^sd\mu=p$,
%$-  \hspace{-1.1em}\int_0^1 g^sd\mu=q$ and $-  \hspace{-1.1em}\int_0^1 (f\cdot g)^s d\mu=r$ and let $f, g:[0, 1]\to\mathbb{R}$ be a continuous and non-monotone function and $\mu$ is Lebesque measure on $\mathbb{R}$ and $s>1$.

%We suppose that $f^s=F$, $g^s=G$ and $(f\cdot g)^s=H$.

%In first case, we suppose that $p, q\in(\frac{1}{2}, 1)$ and $\dfrac{1}{2}<r<1$. We have
%$$\left\{\begin{array}{lll}
%-  \hspace{-1.1em}\int_0^1 Fd\mu=p\Rightarrow F(1-p)=p,\\\\ -  \hspace{-1.1em}\int_0^1 Gd\mu=q\Rightarrow G(q)=q,\\\\ -  \hspace{-1.1em}\int_0^1 Hd\mu=r\Rightarrow H(r)=r.
%\end{array}\right.$$
%We want to show that
%$pq\le r$ or
%$$-  \hspace{-1.1em}\int_0^1 Fd\mu\cdot -  \hspace{-1.1em}\int_0^1 Gd\mu\le -  \hspace{-1.1em}\int_0^1 Hd\mu.$$
%If we assume that $r<pq$, then
%$r>1-r>1-pq$ and
%$$\left\{\begin{array}{ll}
%F(r)>F(1-r)>F(1-pq)\ge F(1-p)\ge p,\\G(r)>G(pq)\ge G(q)\ge q.\end{array}\right.$$
%Therefore
%$$H(r)=F(r)G(r)>pq\Rightarrow H(r)>pq,$$
%which is contradicts.
%In the second case, if we assume that $p=1$, then we have
%$$F(1-p)\ge p\Rightarrow F(0)\ge 1,$$
%because $F$ on $[0, 1]$ is increasing function, hence $F\ge 1$ and
%$$F\cdot G\ge G\Rightarrow -  \hspace{-1.1em}\int_0^1 F\cdot G d\mu\ge -  \hspace{-1.1em}\int_0^1 Gd\mu=q=pq.$$
%Therefore
%$$-  \hspace{-1.1em}\int_0^1 F\cdot Gd\mu\ge -  \hspace{-1.1em}\int_0^1 Fd\mu\cdot -  \hspace{-1.1em}\int_0^1 Gd\mu.$$

%\end{theorem}

\begin{example}
Let $X=[0, 1]$ and the fuzzy measure $\mu$ be defined as $\mu(A)=m^2(A)$, where $m$ is the Lebesque measure. And let 
$$f(x)=\left\{\begin{array}{lll}
\sqrt{x},\quad x\in[0, \frac{1}{4}]\\ \dfrac{\sqrt{2}}{2},\quad x\in(\frac{1}{4}, \frac{1}{2})\\ \sqrt{x},\quad x\in[\frac{1}{2}, 1]
\end{array}\right.\Rightarrow f^2(x)=\left\{\begin{array}{lll}x,\quad x\in[0, \frac{1}{4}]\\ \dfrac{1}{2},\quad x\in(\frac{1}{4}, \frac{1}{2})\\ x,\quad x\in[\frac{1}{2}, 1]
\end{array}\right.$$
and 
$$g(x)=\left\{\begin{array}{ll}x,\quad x\in[0, \frac{1}{2})\\ \sqrt{x},\quad x\in[\frac{1}{2}, 1]
\end{array}\right.\Rightarrow g^2(x)=\left\{\begin{array}{ll}
x^2,\quad x\in[0, \frac{1}{2})\\ x,\quad x\in[\frac{1}{2}, 1]
\end{array}\right.$$
Then 
\begin{eqnarray*}
&& -  \hspace{-1.1em}\int_0^1 f^2d\mu=\bigvee\left(\alpha\wedge m^2\left([0, 1]\cap F_\alpha\right)\right)=\dfrac{1}{2},\\&& 
-  \hspace{-1.1em}\int_0^1 g^2d\mu=\bigvee\left(\alpha\wedge m^2\left([0, 1]\cap G_\alpha\right)\right)=\dfrac{1}{4},\\&& 
\dfrac{1}{4}=-  \hspace{-1.1em}\int_0^1\left(f\cdot g\right)^2 d\mu>\left(-  \hspace{-1.1em}\int_0^1 f^2d\mu\right)\left(-  \hspace{-1.1em}\int_0^1 g^2d\mu\right)=\dfrac{1}{2}\cdot\dfrac{1}{4}.
\end{eqnarray*}
\end{example}

\begin{remark}
If $f, $ and $g$ are countermonotone functions, 
In the fuzzy case, %unlike the classical case,
 it may not change in an unequal direction. This is illustrated by an example in the following.
\end{remark}

%In the following by an example we show the clarify above Theorem.

\begin{example}
Let $f, g:[0, 1]\to\mathbb{R}$, 
$f(x)=x+1$, $g(x)=\dfrac{1}{x+1}$
and $\mu$ is the Lebesque measure on $\mathbb{R}$ and $s=3$. With a simple calculation, we have
\begin{eqnarray*}
&& f^3(x)=(x+1)^3=F,\\&&
g^3(x)=\left(\dfrac{1}{x+1}\right)^3=G,\\&&
(f\cdot g)^3(x)=1=H.
\end{eqnarray*}
Then 
\begin{eqnarray*}
&& -  \hspace{-1.1em}\int_0^1 Fd\mu=\sup\left[\alpha\wedge\left(-\sqrt{3}[\alpha]\right)\right)=1,\\&& 
-  \hspace{-1.1em}\int_0^1 Gd\mu=\sup\left[\alpha\wedge\left(\dfrac{1}{\sqrt{3}[\alpha]}-1\right)\right]=0.38,\\&& 
-  \hspace{-1.1em}\int_0^1 Hd\mu=-  \hspace{-1.1em}\int_0^1 1d\mu=1 .
\end{eqnarray*}
Therefore 
$$1=-  \hspace{-1.1em}\int_0^1 Hd\mu\ge -  \hspace{-1.1em}\int_0^1 Fd\mu\cdot-  \hspace{-1.1em}\int_0^1 Gd\mu=1\times 0. 38.$$
\end{example}

\section{Diyaz-Matcalf inequality for pseudo integrals}

In this Section, we state and prove Diaz-Metcalf inequality for the pseudo integral.

\begin{theorem}
Let $f, h:[0, 1]\to[a, b]$ be two measurable comonotone functions and $g:[a, b]\to[0, \infty)$ be a continuous and monotone function. Then the inequality
\begin{eqnarray}
\label{new}
\int_{[0, 1]}^\oplus (f\odot h)^s dx\ge \int_{[0, 1]}^\oplus f^s dx\odot \int_{[0, 1]}^\oplus h^sdx,
\end{eqnarray}
holds, where $s>1$.% and $f^s=F$ and $h^s=H$ are comonotone functions.
\end{theorem}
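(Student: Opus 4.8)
The plan is to pass from the pseudo-integral to the ordinary Lebesgue integral through the generator $g$, and then to recognise the resulting estimate as a combination of a pointwise inequality and the classical Chebyshev integral inequality for comonotone functions. Since $\oplus$ and $\odot$ are generated by $g$ and the pseudo-integral is the $g$-integral \eqref{31}, I may assume $g$ is increasing (the decreasing case is symmetric). Then $g^{-1}$ is increasing, and unwinding the definitions gives
$$\int_{[0,1]}^{\oplus}(f\odot h)^s\,dx = g^{-1}\!\left(\int_0^1 g\big((f\odot h)^s\big)\,dx\right),$$
while, using $A\odot B = g^{-1}(g(A)g(B))$,
$$\int_{[0,1]}^{\oplus}f^s\,dx \odot \int_{[0,1]}^{\oplus}h^s\,dx = g^{-1}\!\left(\int_0^1 g(f^s)\,dx\cdot\int_0^1 g(h^s)\,dx\right).$$
Because $g^{-1}$ is increasing, the inequality \eqref{new} is equivalent to the Lebesgue-integral inequality
$$\int_0^1 g\big((f\odot h)^s\big)\,dx \ \ge\ \int_0^1 g(f^s)\,dx\cdot\int_0^1 g(h^s)\,dx.$$

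Next I would split this reduced inequality into two independent estimates. The two factors $\alpha(x):=g(f(x)^s)$ and $\beta(x):=g(h(x)^s)$ are comonotone on $[0,1]$, since $f,h$ are comonotone, $t\mapsto t^s$ is increasing on $[0,\infty)$ for $s>1$, and $g$ is monotone. As the Lebesgue measure on $[0,1]$ has total mass one, the classical Chebyshev integral inequality for comonotone functions yields
$$\int_0^1 g(f^s)\,dx\cdot\int_0^1 g(h^s)\,dx \ \le\ \int_0^1 g(f^s)\,g(h^s)\,dx.$$
Hence it suffices to prove the pointwise comparison $g\big((f\odot h)^s\big)\ge g(f^s)\,g(h^s)$ on $[0,1]$; combining the two displays recovers the reduced inequality, and therefore \eqref{new}. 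Equivalently, since $g$ is increasing and $g(f\odot h)=g(f)g(h)$, this pointwise step is exactly $(f\odot h)^s\ge f^s\odot h^s$.

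The main obstacle is precisely this pointwise inequality. Writing $a=g(f(x))$, $b=g(h(x))$ and $\Phi(t):=g\big((g^{-1}(t))^s\big)$, it amounts to the supermultiplicativity $\Phi(ab)\ge\Phi(a)\Phi(b)$, which is where the hypothesis $s>1$ and the convexity of $t\mapsto t^s$ must be used; note that for the borderline case of a power generator it holds with equality, so the statement is at least consistent. The cleanest way I expect to package the crux is to invoke Theorem \ref{t4.1} with $\varphi(t)=t^s$, which is continuous and strictly increasing for $s>1$, and whose standing requirement that $\varphi$ commute with $\odot$ is exactly the identity $(x\odot y)^s=x^s\odot y^s$ that makes $\Phi$ multiplicative. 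Applying that theorem to the comonotone pair $u=f$, $v=h$ gives
$$\left(\int_{[0,1]}^{\oplus}(f\odot h)^s\,dx\right)^{1/s} \ge \left(\int_{[0,1]}^{\oplus}f^s\,dx\right)^{1/s} \odot \left(\int_{[0,1]}^{\oplus}h^s\,dx\right)^{1/s},$$
and raising both sides to the power $s$, using once more that $\varphi$ commutes with $\odot$ to collapse the right-hand side, produces \eqref{new}. I would therefore devote the bulk of the argument to verifying the commuting (supermultiplicativity) property of $t\mapsto t^s$ relative to the generated $\odot$, since the two integral estimates above are routine.
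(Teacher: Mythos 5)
Your overall route --- unwind the pseudo-integral into the $g$-integral \eqref{31}, apply the classical Chebyshev integral inequality to the comonotone pair $g(f^s)$, $g(h^s)$ on the probability space $([0,1],dx)$, and transport the result back through $g^{-1}$ --- is exactly the paper's route, and those two steps are indeed routine. The problem is the step you yourself single out as the crux: the pointwise comparison $(f\odot h)^s\ge f^s\odot h^s$, equivalently the supermultiplicativity $\Phi(ab)\ge\Phi(a)\Phi(b)$ of $\Phi(t)=g\bigl((g^{-1}(t))^s\bigr)$. This does \emph{not} follow from $s>1$ and convexity of $t\mapsto t^s$; it is a genuine extra condition on the generator. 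It holds (with equality) for power generators $g(x)=x^k$, where $\Phi(t)=t^s$, but already for $g(x)=e^{x}$ one gets $x\odot y=x+y$ and the claim becomes $(f+h)^s\ge f^s+h^s$, which fails as soon as $f$ and $h$ may take values of opposite sign in $[a,b]$; and for a decreasing generator the reduction itself reverses direction, so that Chebyshev then points the wrong way. Invoking Theorem \ref{t4.1} with $\varphi(t)=t^s$ is circular here: that theorem \emph{assumes} $\varphi$ commutes with $\odot$, which is precisely the identity $(x\odot y)^s=x^s\odot y^s$ you are trying to establish, and it is not among the hypotheses of the present statement (which only asks $g$ to be continuous and monotone). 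So the verification you defer to ``the bulk of the argument'' cannot be carried out under the stated hypotheses.

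For what it is worth, the paper's own proof has the identical gap, only hidden: its very first line asserts $\int_{[0,1]}^{\oplus}(f\odot h)^s\,dx=\int_{[0,1]}^{\oplus}(f^s\odot h^s)\,dx$ with no justification, which is the same unproved commuting identity. You have therefore faithfully reconstructed the paper's argument and, to your credit, located its weak point --- but you have not closed it. To make the theorem provable by this method one must either restrict to generators for which $t\mapsto t^s$ commutes with $\odot$ (e.g.\ $g(x)=x^k$, as in the paper's worked example) or add the supermultiplicativity of $\Phi$ explicitly as a hypothesis.
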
\begin{proof}
%If we take $f^s=F$ and $h^s=H$, then we have
\begin{eqnarray*}
\int_{[0, 1]}^\oplus (f\odot h)^sdx &=&
\int_{[0, 1]}^\oplus\left(f^s\odot h^s\right)dx\\&=&
\int_{[0, 1]}^\oplus \left(f^s\odot h^s\right)dx\\&=&
g^{-1}\left[\int_0^1 g\left(f^s\odot h^s\right)dx\right]\\&=&
g^{-1}\int_0^1 g(f^s)\times g(g^s) dx\\&\ge &
g^{-1}\left[\int_0^1 g(f^s)dx\times \int_0^1 g(h^s)dx\right]\\&=&
g^{-1}\left[g\left(g^{-1}\left(\int_0^1 g(f^s)dx\right)\right)\times g\left(g^{-1}\left(\int_0^1 g(h^s)dx\right)\right)\right]\\&=&
g^{-1}\left(g\left(\int_{[0, 1]}^\oplus f^sdx\right)\times g\left(\int_{[0, 1]}^\oplus h^sdx\right)\right)\\&=&
\int_{[0, 1]}^\oplus f^sdx\odot \int_{[0, 1]}^\oplus h^sdx.
\end{eqnarray*}
%If $F$ and $H$ are countermonotone, we have
%$$\int_{[0, 1]}^\oplus (f\odot h)^sdx\le \int_{[0, 1]}^\oplus f^s dx\odot \int_{[0, 1]}^\oplus h^s dx.$$
The proof is now complete.
\end{proof}

\begin{example}
Let $f, h: [0, 1]\to [0, 1]$ defined by $f(x)=\sqrt{x}$ and $h(x)=\sqrt{\dfrac{1}{2}x}$. Let $g:[0, 1]\to'0, \infty)$ be $g(x)=x^2$ and $s=2$. A strightforward calculation shows that 
%\begin{eqnarray*}
%&& \int_0^1 g(f^2)dx=\int_0^1 x^dx=\dfrac{1}{3},\\&&\int_0^1 g(h^2)dx=\int_0^1 \dfrac{1}{4}x^2dx=\dfrac{1}{12},\\&& \int_0^1 g(f^2)\cdot g(h^2)dx=\int_0^1 \dfrac{1}{4}x^4dx=\dfrac{1}{20},\\&&g^{-1}\int_0^1 g(f^2)\cdot g(h^2)dx=\dfrac{\sqrt{20}}{20},\\&& g^{-1}\left[\int_0^1 g(f^2)dx\times\int_0^1 g(h^2)dx\right]=\dfrac{1}{6}.\end{eqnarray*}
%Therefore 
%$$\dfrac{\sqrt{20}}{2}=\dfrac{\sqrt{20}}{20}\ge\dfrac{1}{6} .$$
%Then 
$$\dfrac{\sqrt{20}}{2}=\int_{[0, 1]}^\oplus \left(f^2\odot h^2\right)dx\ge\int_{[0, 1]}^\oplus f^2dx\odot\int_{[0, 1]}^\oplus h^2dx=\dfrac{1}{6}.$$
\end{example}

Now, we generalize the Diaz-Metcalf type inequlity by the semiring $\left([0, 1], \max, \odot\right)$, where $\odot$ is generated.

\begin{theorem}
 Let $f, h: [0, 1]\to [a, b]$
be two measurable comonotone functions and $([0, 1], \sup, \odot)$ be a simiring and $m$ be the same as Theorems \ref{t1} and \ref{t2}. If $g$ is the continuous and bounded function, then the following inequality is holds
$$\int_{[0, 1]}^{\sup} (f\odot h)^s dx\ge \left( \int_{[0, 1]}^{\sup} f^s \odot dm\right)\odot \left(\int_{[0, 1]}^{\sup} h^s \odot dm\right),$$
where $f$ and $h$ are comonoton functions and $s>1$.
\end{theorem}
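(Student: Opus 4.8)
The plan is to deduce the sup-integral inequality from the already-established generated-integral inequality \eqref{new} by means of the approximation theorem of Mesiar and Pap (Theorem \ref{t2}): I would prove the inequality at every finite level $\lambda$ for the generated pseudo-integral and then let $\lambda\to\infty$.

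First I would record the algebraic observation that makes the reduction clean, namely that the pseudo-multiplication is unaffected by the approximation. If $\odot_\lambda$ denotes the pseudo-multiplication generated by $g^\lambda$, then for every $\lambda\in(0,\infty)$
$$x\odot_\lambda y=(g^\lambda)^{-1}\bigl(g^\lambda(x)\,g^\lambda(y)\bigr)=g^{-1}\Bigl(\bigl((g(x)g(y))^\lambda\bigr)^{1/\lambda}\Bigr)=g^{-1}(g(x)g(y))=x\odot y,$$
so $\odot_\lambda=\odot$ is independent of $\lambda$. Hence the same $\odot$ appears on both sides of the inequality at every level, and only the pseudo-addition, and therefore the integral functional, actually depends on $\lambda$.

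Next, since $g$ is continuous and monotone, so is $g^\lambda$ for each $\lambda>0$; therefore inequality \eqref{new}, which was proved for an arbitrary continuous monotone generator, applies verbatim with $g$ replaced by $g^\lambda$. For the comonotone pair $f,h$ this gives, for every $\lambda\in(0,\infty)$,
$$\int_{[0,1]}^{\oplus_\lambda}(f\odot h)^s\,dx\ \ge\ \int_{[0,1]}^{\oplus_\lambda}f^s\,dx\ \odot\ \int_{[0,1]}^{\oplus_\lambda}h^s\,dx,$$
where $\oplus_\lambda$ is generated by $g^\lambda$. The integrands $(f\odot h)^s$, $f^s$, $h^s$ are fixed and $f,h$ remain comonotone independently of $\lambda$, so the hypotheses of \eqref{new} are met uniformly in $\lambda$.

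Finally I would pass to the limit $\lambda\to\infty$. By Theorem \ref{t2} (equation \eqref{00}), the $\oplus_\lambda$-integral of a continuous function converges to the corresponding sup-integral with respect to $m$; applied to the three functions above, the left-hand side tends to $\int_{[0,1]}^{\sup}(f\odot h)^s\,dx$ while the two factors on the right tend to $\int_{[0,1]}^{\sup}f^s\odot dm$ and $\int_{[0,1]}^{\sup}h^s\odot dm$. Since $\odot$ is generated by the continuous function $g$ it is jointly continuous, so it commutes with these limits, and as non-strict inequalities survive limits the bound is preserved, giving the claim. The main obstacle is exactly this limiting step: one must check that the boundedness of $g$ keeps every $g^\lambda$-integral finite and the limits well defined, that Theorem \ref{t2} genuinely applies to each of the continuous integrands $(f\odot h)^s$, $f^s$, $h^s$, and that joint continuity of $\odot$ together with convergence of the two factors lets the product on the right pass through the limit; the algebraic reductions surrounding it are routine.
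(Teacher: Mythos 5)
Your proposal is correct and follows essentially the same route as the paper: both prove the inequality at each finite level $\lambda$ for the $g^\lambda$-generated pseudo-integral (i.e.\ apply the generated-integral inequality \eqref{new} with generator $g^\lambda$) and then pass to the limit $\lambda\to\infty$ via the Mesiar--Pap approximation theorem, using continuity of $\odot$ to interchange the limit with the pseudo-multiplication. Your explicit observation that $\odot_\lambda=\odot$ is independent of $\lambda$ is a useful clarification that the paper uses only implicitly.
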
\begin{proof}
%Let $f^s=F$ and $h^s=H$.
We have
\begin{eqnarray*}
\int_{[0, 1]}^{\sup} (f\odot h)^s \odot dm &=&
\lim_{\lambda\to\infty} (f\odot h)^s\odot dm_\lambda\\&=& \lim_{\lambda\to\infty} \left(g^\lambda\right)^{-1}\left(\int_0^1 g^\lambda\left(f^s\odot h^s\right)dx\right)\\&=&
\lim_{\lambda\to\infty}\left(g^\lambda\right)^{-1}\left( \int_0^1 g^\lambda (f^s\odot h^s)(x)dx\right)\\&\ge &
\lim_{\lambda\to\infty} \left(\left(g^\lambda\right)^{-1}\left(\int_0^1 g^\lambda(f^s(x))dx\right)\odot \left(g^\lambda\right)^{-1}\left(\int_0^1 g^\lambda(h^s(x))dx\right)\right)\\&=&
\left(\lim_{\lambda\to\infty}\left(g^\lambda\right)^{-1}\left(\int_0^1 g^\lambda(f^s(x)\right)dx\right)\odot \left(\lim_{\lambda\to\infty}\left(g^\lambda\right)^{-1}\left(\int_0^1 g^\lambda(h^s(x))dx\right)\right)\\&=&
\left(\int_{[0, 1]}^{\sup} f^s\odot dm\right)\odot \left(\int_{[0, 1]}^{\sup} h^s\odot dm\right).
\end{eqnarray*}
The proof is now complete.
\end{proof}

\begin{example}
Let 
$g^\lambda(x)=e^{\lambda x}$ and $\psi(x)$ be the form Theorem \ref{t2}. Then 
$$x\oplus y=\lim_{\lambda\to\infty}\dfrac{1}{\lambda}\ln\left(e^{\lambda x}+e^{\lambda y}\right)=\max(x, y)$$
and 
$$x\odot y=\lim_{\lambda\to\infty}\dfrac{1}{\lambda} \ln\left(e^{\lambda x}\cdot e^{\lambda y}\right)=x+y.$$
Therefore, we get 
$$\sup\left(f^s(x)+h^s(x)\psi(x)\right)\ge\sup\left(f^s(x)+\psi(x)\right)+\sup\left(h^s(x)+\psi(x)\right).$$
\end{example}

\section{Further discussion}
\begin{theorem}
Let $f, h: [0, 1]\to [a, b]$ be two measurable functions. Let a generator $g:[a, b]\to[0, \infty)$ of the pseudo addition $\oplus$ and the pseudo-multiplication $\odot$ be an increasing function. Let $\varphi: [a, b]\to [a, b]$ be a continuous and strictly increasing function such that commutes with $\odot$. If $f$ and $h$ are comonotone, then the inequality
\begin{eqnarray*}
\varphi^{-1}\left(\int_{[0, 1]}^{\oplus} \varphi\left(f\odot h\right)^sdx\right)% &=&\varphi^{-1}\left(\int_{[0, 1]}^{\oplus}\varphi\left(f^s\odot h^s\right)dx\right)\\
&\ge &
\left(\varphi^{-1}\left(\int_{[0, 1]}^{\oplus} \varphi(f^s)dx\right)\right)\odot\left(\varphi^{-1}\left(\int_{[0, 1]}^{\oplus} \varphi(h^s)dx\right)\right),
\end{eqnarray*}
holds, where $s>1$.% and the reverse inequality holds whenever $f$ and $h$ are countermonotone functions.
\end{theorem}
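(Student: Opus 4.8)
The plan is to deduce this statement directly from Theorem \ref{t4.1}, specializing that result to the pair of functions $f^s$ and $h^s$ rather than proving anything from scratch. First I would record that the hypotheses of Theorem \ref{t4.1} transfer verbatim: the generator $g$ is increasing, $\varphi$ is continuous, strictly increasing and commutes with $\odot$, and these are exactly the standing assumptions here. The only thing that must be checked is that the pair $(f^s,h^s)$ is comonotone. Since $s>1$, the map $t\mapsto t^s$ is strictly increasing on $[0,\infty)$, and composing two functions with a common monotone map preserves comonotonicity; hence $f^s$ and $h^s$ are comonotone whenever $f$ and $h$ are.

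Next I would set $u=f^s$ and $v=h^s$ in Theorem \ref{t4.1}, which immediately yields
$$\varphi^{-1}\!\left(\int_{[0,1]}^{\oplus}\varphi(f^s\odot h^s)\,dx\right)\ge\left(\varphi^{-1}\!\left(\int_{[0,1]}^{\oplus}\varphi(f^s)\,dx\right)\right)\odot\left(\varphi^{-1}\!\left(\int_{[0,1]}^{\oplus}\varphi(h^s)\,dx\right)\right).$$
The right-hand side here is already the right-hand side of the claimed inequality, so the entire remaining task is to identify the left-hand integrand, i.e.\ to show that $\varphi\bigl((f\odot h)^s\bigr)$ may be replaced by $\varphi(f^s\odot h^s)$. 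In other words, the proof reduces to the single algebraic identity $(f\odot h)^s=f^s\odot h^s$, after which substitution into the display above closes the argument with no further inequality needed.

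The hard part will be justifying this last identity, since it is not automatic for an arbitrary generated $\odot$. For a pseudo-multiplication generated by $g$ via $x\odot y=g^{-1}(g(x)g(y))$ one has $f^s\odot h^s=g^{-1}(g(f^s)g(h^s))$, and I would verify the identity in the structurally relevant cases: when $g$ is a power function $g(t)=t^k$ the operation $\odot$ collapses to ordinary multiplication and $(f\odot h)^s=f^s h^s=f^s\odot h^s$ is immediate, while if the exponent is read as the pseudo-power $x^{[s]}=g^{-1}(g(x)^s)$ then $(f\odot h)^{[s]}=g^{-1}\bigl(g(f)^s g(h)^s\bigr)=f^{[s]}\odot h^{[s]}$ by a one-line substitution. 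Establishing the identity in the intended setting is therefore the genuine content of the proof; once it is in place, the inferential weight is carried entirely by Theorem \ref{t4.1}, with the exponent simply commuting through the pseudo-multiplication.
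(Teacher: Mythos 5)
Your proposal follows essentially the same route as the paper: the paper's proof is just an unfolding of Theorem \ref{t4.1} for the pair $(f^s,h^s)$ --- it uses the commutation of $\varphi$ with $\odot$ to rewrite $\varphi(f^s\odot h^s)$ as $\varphi(f^s)\odot\varphi(h^s)$, applies the Chebyshev inequality for comonotone functions, and pulls $\varphi^{-1}$ back through $\odot$ --- so invoking that theorem directly, as you do, is the same argument packaged as a citation rather than a re-derivation. The one substantive point is the identity $(f\odot h)^s=f^s\odot h^s$, which you correctly single out as the genuine obstacle: the paper does not address it at all (its proof simply begins from $\int_{[0,1]}^{\oplus}\varphi(f^s\odot h^s)\,dx$ even though the statement involves $\varphi\bigl((f\odot h)^s\bigr)$), and the identity is false for a general increasing generator --- for $g(x)=e^{x}$ one gets $x\odot y=x+y$, so $(f\odot h)^s=(f+h)^s\ne f^s+h^s=f^s\odot h^s$. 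As you note, it does hold when $g$ is a power function (so that $\odot$ is ordinary multiplication) or when $x^s$ is interpreted as the pseudo-power $g^{-1}\bigl(g(x)^s\bigr)$. So your argument is complete on exactly the same terms as the paper's, and you have in fact isolated the one step that the paper's own proof silently assumes and never justifies; stating the theorem under one of those two additional hypotheses would make both proofs rigorous.
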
\begin{proof}
Since $\varphi$ commutes with $\odot$, then we have
\begin{eqnarray}
\label{(i)}
\int_{[0, 1]}^{\oplus}\varphi\left(f^s\odot h^s\right)dx=\int_{[0, 1]}^{\oplus}\left(\varphi(f^s)\odot \varphi(h^s)\right)dx.
\end{eqnarray}
If $f$ and $h$ are comonotone functions and $\varphi$ is a continuous and strictly function, then $\varphi(f^s)$ and $\varphi(h^s)$ are also comonotone. From \eqref{(i)} and using Chebyshev's Theorem, we have
\begin{eqnarray*}
\int_{[0, 1]}^{\oplus}\varphi\left(f^s\odot h^s\right)dx&\ge& \left(\int_{[0, 1]}^{\oplus}\varphi(f^s)dx\right)\odot \left(\int_{[0, 1]}^{\oplus} \varphi(h^s)dx\right).%\\&=&\left[\left(\varphi^{-1}\left(\int_{[0, 1]}^{\oplus} \varphi(f^s)dx\right)\right)\odot\left(\varphi^{-1}\left(\int_{[0, 1]}^{\oplus}\varphi(h^s)dx\right)\right)\right],
\end{eqnarray*}
It follows that 
\begin{eqnarray*}
\varphi^{-1}\int_{[0, 1]}^{\oplus}\varphi\left(f^s\odot h^s\right)dx&\ge& \varphi^{-1}\left[\left(\int_{[0, 1]}^{\oplus}\varphi(f^s)dx\right)\odot \left(\int_{[0, 1]}^{\oplus} \varphi(h^s)dx\right)\right]\\&=&
\left[\left(\varphi^{-1}\left(\int_{[0, 1]}^{\oplus}\varphi(f^s)dx\right)\right)\odot\left(\varphi^{-1}\left(\int_{[0, 1]}^{\oplus}\varphi(h^s)dx\right)\right)\right],
\end{eqnarray*}
where $\varphi$ commutes with $\odot$, hence \eqref{(i)} is valid.

Similarly, if $f$ and $h$ are countermonotone functions and $\varphi$ is a continuous and strictly increasing function, then $\varphi(f^s)$ and $\varphi(h^s)$ are also countermonotone.

From \eqref{(i)} and using Chebyshev's Theorem, we have
$$\varphi^{-1}\left(\int_{[0, 1]}^{\oplus} \varphi\left(f^s\odot h^s\right)dx\right)\le\left(\varphi^{-1}\left(\int_{[0, 1]}^{\oplus} \varphi(f^s)dx\right)\right)\odot\left(\varphi^{-1}\left(\int_{[0, 1]}^{\oplus} \varphi(h^s)dx\right)\right),$$
where $\varphi$ commutes with $\odot$. Thereby, the theorem is proved.
\end{proof}

%\begin{example}

%\end{example}
Now, we consider the second class, when $x\oplus y=\max(x, y)$ and $x\odot y=g^{-1}\left(g(x)g(y)\right)$.

\begin{theorem}
Let $f, h: [0, 1]\to [a, b]$ be two continuous functions and $\odot$ be represented by an increasing multiplicative generator $g$ and $\varphi: [a, b]\to [a, b]$ be a continuous and strictly increasing functions such that $\varphi$ commutes with $\odot$ be the same as in Theorems \ref{t1} and \ref{t2}. If $f$ and $h$ are comonotone, then the inequality
\begin{eqnarray}
\label{(ii)}
\varphi^{-1} \left(\int_{[0, 1]}^{\sup} \varphi\left(f^s\odot h^s\right)\odot dm\right)\ge \left(\varphi^{-1}\left(\int_{[0, 1]}^{\sup} \varphi(f^s)\odot dm\right)\right)\odot \left(\varphi^{-1}\left(\int_{[0, 1]}^{\sup}\varphi(f^s)\odot dm\right)\right),
\end{eqnarray}
holds, where $s>1$.
%and the reverse inequality holds whenever $f$ and $h$ are countermonotone functions.
\end{theorem}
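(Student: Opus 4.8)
The plan is to reduce the $\sup$-integral statement to the generated-$\oplus$ case and then pass to the limit $\lambda\to\infty$ using the representation of the $\sup$-integral furnished by Theorem~\ref{t2}, in exact parallel with how the $\sup$-version of the product inequality in Section~3 was deduced from its $\oplus$-counterpart. First I would record the two structural facts that do not involve the integral at all. Since $\varphi$ commutes with $\odot$, the integrand on the left can be rewritten as
$$\varphi\left(f^s\odot h^s\right)=\varphi(f^s)\odot\varphi(h^s),$$
which is the identity already exploited in \eqref{(i)}. Moreover, because $f$ and $h$ are comonotone and both $t\mapsto t^s$ (with $s>1$) and the strictly increasing map $\varphi$ are monotone, the transformed functions $\varphi(f^s)$ and $\varphi(h^s)$ are again comonotone; this is precisely the hypothesis required to apply a Chebyshev-type inequality.

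Next I would invoke Theorem~\ref{t2} to express each of the three $\sup$-integrals as a limit of $g^\lambda$-integrals, writing
$$\int_{[0,1]}^{\sup}\varphi\left(f^s\odot h^s\right)\odot dm=\lim_{\lambda\to\infty}\left(g^\lambda\right)^{-1}\!\left(\int_0^1 g^\lambda\!\left(\varphi(f^s)\odot\varphi(h^s)\right)dx\right),$$
and similarly for the two factors on the right. For each fixed $\lambda$ the operation $\oplus_\lambda$ generated by $g^\lambda$ is an ordinary generated pseudo-addition, so the comonotone Chebyshev inequality for generated pseudo-integrals (the generated-$\oplus$ version with $\varphi$ established above, i.e.\ Theorem~\ref{t4.1} applied with generator $g^\lambda$) yields, at level $\lambda$,
$$\left(g^\lambda\right)^{-1}\!\left(\int_0^1 g^\lambda\!\left(\varphi(f^s)\odot\varphi(h^s)\right)dx\right)\ge\left(g^\lambda\right)^{-1}\!\left(\int_0^1 g^\lambda(\varphi(f^s))dx\right)\odot\left(g^\lambda\right)^{-1}\!\left(\int_0^1 g^\lambda(\varphi(h^s))dx\right).$$

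Finally I would let $\lambda\to\infty$ on both sides. Because $\odot$ is continuous, the limit can be moved inside the pseudo-multiplication, so the right-hand side converges to the $\odot$-product of the two $\sup$-integrals, while the left-hand side converges to $\int_{[0,1]}^{\sup}\varphi(f^s\odot h^s)\odot dm$ by Theorem~\ref{t2}; since inequalities are preserved under limits, the displayed estimate survives the passage. Applying $\varphi^{-1}$, which is continuous and strictly increasing and hence order-preserving, together with the commutation of $\varphi$ with $\odot$, then delivers \eqref{(ii)}. The main obstacle I anticipate is the rigorous justification of this last interchange: one must check that the limits defining the two right-hand factors genuinely exist (exactly the content of Theorem~\ref{t2}), that the level-$\lambda$ Chebyshev inequality applies with $\varphi$ against the $\lambda$-dependent operations, and that the continuity of $\odot$ and of $\varphi^{-1}$ legitimizes pushing a proven inequality through the limit; everything else is a routine transcription of the $\oplus$-case.
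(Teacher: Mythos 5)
Your proposal is correct and follows essentially the same route as the paper's own proof: both express the $\sup$-integral as a limit of $g^\lambda$-integrals via Theorem~\ref{t2}, apply the $\varphi$-version of the Chebyshev inequality (Theorem~\ref{t4.1}) at each level $\lambda$ together with the commutation $\varphi(f^s\odot h^s)=\varphi(f^s)\odot\varphi(h^s)$ and the preservation of comonotonicity, and then pass to the limit using continuity of $\odot$ and $\varphi^{-1}$. If anything, your write-up states the limit-interchange and the $\lambda$-dependence of the operations more carefully than the paper does.
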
\begin{proof}
Theorem \ref{t2}, implies that
\begin{eqnarray*}
\int_{[0, 1]}^{\sup}\varphi\left(f^s\odot h^s\right)\odot dm &=& \lim_{\lambda\to\infty} \int_{[0, 1]}^{\oplus_\lambda} \varphi\left(f^s\odot h^s\right)\odot dm_{\lambda}\\&=&
\lim_{\lambda\to\infty}\left(g^{\lambda}\right)^{-1}\left(\int_0^1 g^\lambda\left(\left(f^s\odot h^s\right)\right)(x)dx\right).
\end{eqnarray*}
If $f$ and $h$ are comonotone functions and $\varphi$ is a continuous and strictly functions, then by using first Theorem \ref{t4.1} and the Theorem \ref{t2}, we have
\begin{eqnarray*}
&&\varphi^{-1}\left(\int_{[0, 1]}^{\sup}\varphi\left(f^s\odot h^s\right)\odot dm\right)\\&=&
\varphi^{-1}\left(\lim_{\lambda\to\infty} \left(g^{\lambda}\right)^{-1}\left(\int_0^1 g^\lambda\left(f^s\odot h^s\right)(x)\right)dx\right)\\&=&
\lim_{\lambda\to\infty} \left(\varphi^{-1}\left(\left(g^{\lambda}\right)^{-1}\left(\int_0^1 g^{\lambda}\left(\varphi\left(f^s\odot h^s\right)(x)\right)dx\right)\right)\right) \\&\ge &
\lim_{\lambda\to\infty}\left[\left(\left(g^{\lambda}\right)^{-1}\left(\varphi^{-1}\left(\int_0^1 g^\lambda \left(\varphi(f^s(x)\right)\right)\right)\right) \odot \left(\left(g^{\lambda}\right)^{-1}\left(\varphi^{-1}\left(\int_0^1 g^{\lambda} \left(\varphi(h^s(x)\right)\right)\right)\right)\right]\\&=&
\lim_{\lambda\to\infty}\left(
\left(g^{\lambda}\right)^{-1}
\left(
\varphi^{-1}\left(
\int_0^1 g^\lambda \left(
\varphi\left(f^s(x)\right)
\right)\right)dx\right)\right)\odot \lim_{\lambda\to\infty}\left(\left(g^{\lambda}\right)^{-1}\left(\varphi^{-1}\left(\int_0^1 g^\lambda \left(\varphi\left(h^s(x)\right)\right)\right)dx\right)\right)\\&=&
\lim_{\lambda\to\infty}\left(g^{\lambda}\right)^{-1}\left(\int_0^1 g^\lambda\left(\left(f^s\odot h^s\right)\right)(x)dx\right),
\end{eqnarray*}
where $\varphi$ commutes with $\odot$. Hence \eqref{(ii)} is valid.

%Similarly, if $f$ and $h$ are countermonotone, the inequality is reversed.
\end{proof}

\date{\scriptsize $^{a}$
%Department of Mathematics, University of Maragheh, P. O. Box 55136-553, Maragheh, Iran.\\
E-mail: mkmk01@gmail.com,}
\date{\scriptsize $^{b}$bdaraby@maragheh.ac.ir
%Department of Mathematics, University of Maragheh, P. O. Box 55136-553, Maragheh, Iran.\\
E-mail: ,}
%\date{\scriptsize $^{c}$
%Department of Mathematics, University of Maragheh, P. O. Box 55136-553, Maragheh, Iran.\\
%E-mail: alirezakhodadadi@maragheh.ac.ir,\\
\date{\scriptsize $^{c}$
E-mail: rahimi@maragheh.ac.ir
%\date{\scriptsize $^{d}$ E-mail: mesiar@math.sk
}
\end{document}